\newtheorem{theorem}{Theorem}
\newtheorem{lemma}{Lemma}
\begin{document}
\title{A robust and conservative dynamical low-rank algorithm}
\author[uibk]{Lukas Einkemmer} \ead{lukas.einkemmer@uibk.ac.at}
\author[uibk]{Alexander Ostermann} \ead{alexander.ostermann@uibk.ac.at}
\author[univaq]{Carmela Scalone\corref{cor1}} \ead{carmela.scalone@univaq.it}
\address[uibk]{Department of Mathematics, University of Innsbruck, Austria}
\address[univaq]{Dipartimento di Ingegneria e Scienze dell'Informazione e Matematica, University of L'Aquila, Italy}
\cortext[cor1]{Corresponding author}
\begin{abstract}
Dynamical low-rank approximation, as has been demonstrated recently, can be extremely efficient in solving kinetic equations. However, a major deficiency is that it does not preserve the structure of the underlying physical problem. For example, the classic dynamical low-rank methods violate mass, momentum, and energy conservation. In [L. Einkemmer, I. Joseph, J. Comput. Phys. 443:110495, 2021] a conservative dynamical low-rank approach has been proposed. However, directly integrating the resulting equations of motion, similar to the classic dynamical low-rank approach, results in an ill-posed scheme. In this work we propose a robust, i.e.~well-posed, low-rank integrator that conserves mass and momentum (up to machine precision) and significantly improves energy conservation. We also report improved qualitative results for some problems and show how the approach can be combined with a rank adaptive scheme.
 \end{abstract}
\begin{keyword} dynamical low-rank approximation, conservative methods, structure preserving numerical methods, complexity reduction, high-dimensional problems, kinetic equations \end{keyword}
\maketitle

\section{Introduction}

Kinetic equations are used to model physical phenomena ranging from the transport of edge localized modes to the divertor in magnetic confinement fusion devices, to calculating dosage in radiation therapy. They provide a better picture of the underlying physics in situations where the assumptions made to derive fluid models (i.e.~that the system is in thermodynamic equilibrium) do not hold. However, due to the dependence of these partial differential equations on both physical space $x$ and velocity space $v$, the problem is posed in an up to six-dimensional phase space. This implies that the memory requirements and computational cost scale as $\mathcal{O}(n^6)$, where $n$ is the number of grid points per dimension. This unfavorable scaling is often referred to as the curse of dimensionality and implies that directly solving such problems is extremely expensive.

While particle methods have been historically used to alleviate this computational burden, it is also well known that they suffer from excessive noise which makes it difficult to resolve certain phenomena, such as Landau damping. Recently, dynamical low-rank approximations, as introduced in \cite{KL07}, have been used successfully for kinetic problems. This approach approximates the six-dimensional dependence of the density function by a set of lower dimensional equations, thus drastically reducing the required computational and memory cost.

It has been demonstrated that dynamical low-rank approximation works well for kinetic models in many cases (see, e.g., \cite{El18,einkemmer2020low,cassini2021efficient} for problems in plasma physics and \cite{peng2019low,einkemmer2020asymptotic,kusch2022low,kusch2021robust} for radiation transport), and even some progress has been made in analyzing such methods mathematically \cite{ding2019error,kusch2021stability}. The drawback of the classic dynamical low rank integrators is that the approximation made destroys the inherent physical structure of the problem. To be more precise, the important physical invariants of mass, momentum, and energy are no longer conserved (even before a space and time discretization is introduced). This can significantly impact the numerical solution and lead to qualitatively incorrect results.

In some work it has been suggested to restore mass conservation by renormalizing the distribution function; the obvious disadvantage being that this approach can not be extended to achieve simultaneous conservation of both mass and momentum. Moreover, it has been shown in \cite{EL18_cons} that just conserving the invariants is insufficient in order to obtain good qualitative results. In fact, for each of mass, momentum, and energy there is an underlying conservation law (the moment equation) that needs to be enforced in order to obtain physically reasonable results. The latter, in particular, has proved challenging. Both \cite{EL18_cons} and \cite{peng2020high} have used a correction to improve (but not remove) the error made in these conservation laws.

Recently, in \cite{EJ} a different method has been used to obtain the first dynamical low-rank integrator that is mass, momentum, and energy conservative from first principle. The fundamental idea is to modify the Galerkin condition in such a way that the equations of motion enforce the momentum equations, thus mimicking the structure of the original equation. This also implies conservation of mass, momentum, and energy. Combined with appropriate time and space discretization the approach then results in a fully conservative dynamical low-rank algorithm. This approach has since been used to obtain conservative low-rank algorithms in a different context \cite{guo2022conservative}.

In this paper we show that the approach described can be combined with an integrator that is robust in the presence of small singular values. Such robust integrators are desirable as they remove the need for regularizing the equations of motion, which can again destroy conservation. They also make it much easier to implement rank adaptive methods. We start with the unconventional integrator of Ceruti and Lubich described in \cite{ceruti2020unconventional}. We show how this integrator can be modified such that a fully conservative scheme can be attained. In particular, this requires us to add a set of basis functions to the approximation space in order that the moment equations are satisfied. This increases the rank and thus a truncation has to be performed after each time step. We can, however, perform this truncation in a conservative way. The resulting scheme conserves mass and momentum (as it preserves the corresponding conservation laws) up to machine precision, and it shows improved energy conservation. This, in practice, results in a method with much better properties compared to the original dynamical low-rank algorithm.

In this work we illustrate the algorithm for the Vlasov--Poisson model commonly used in plasma physics (see, section \ref{sec2} for a discussion of the model and its physical structure). \textcolor{black}{It should be noted, however, that since our algorithm relies on a way to enforce conservation by mimicking the continuous system, our approach can be easily applied to other kinetic equations as well}. Based on the description of the conservative dynamical low-rank integrator in section  \ref{sec3}, we outline our approach in section \ref{sec:robustcons}.
In section \ref{rank_adaptivity}, we present the rank adaptive approach. Finally, we show numerical results for linear and nonlinear Landau damping and the two-stream instability in section \ref{numerics}.

\section{Vlasov--Poisson equations and their physical structure}
\label{sec2}
Under the assumption of a constant background ion density, the time evolution of the electron density $f(t,x,v)$ in a collisionless plasma in the electrostatic regime is modeled by the Vlasov--Poisson equations
\begin{align*}
\partial_t f(t,x,v) + v \cdot \nabla_x f(t,x,v) -E(f)(t,x) \cdot \nabla_v f(t,x,v) = 0,\\
\nabla_x \cdot E(f)(t,x) =  1- \int f(t,x,v) \,dv, \quad \nabla_x \times E(f)(t,x)=0,
\end{align*}
where $(x,v) \in \Omega = \Omega_x \times \Omega_v$ with $\Omega_x \subset \mathbb{R}^d$ and $\Omega_v \subset \mathbb{R}^d$. The unknown is the density function $f(t,x,v)$. The electric field is usually expressed in terms of a potential $\phi$, i.e.~$E = - \nabla \phi$, which solves the following Poisson problem
\[ -\Delta \phi(f)(t,x) = 1 - \int f(t,x,v) \,dv. \]

This is a typical example of a physical phenomenon described by a kinetic model.
Since this model is posed in a $2d$-dimensional phase space (with $d\leq 3$), the numerical approximation of the solution on a grid is very expensive.

The Vlasov--Poisson equations have a rich physical structure. More specifically, they form a non-canonical Hamiltonian system with an infinite number of invariants (see, e.g.,~\cite{morrison1980maxwell, casas2017high, CEF15} for more details). Here we will focus on conserving the physically important invariants of mass $M$ and momentum $J$ given by
$$M(t)=\int_{\Omega} f(t,x,v) \,d(x,v), \quad \quad J(t)=\int_{\Omega} v f(t,x,v) \,d(x,v),$$
and energy $\mathcal{E}$
$$  \mathcal{E} = \frac{1}{2}\int_{\Omega} v^2 f(t,x,v) \,d(x,v)+ \frac{1}{2} \int_{\Omega_x}E(t,x)^2 \,dx.$$
\textcolor{black}{Note that in the multi-dimensional setting $v$ and $E$ are (up to) three-dimensional vectors and we use $v^2$ and $E^2$ to denote their squared Euclidean norm, e.g. $v^2 = \sum_i v_i^2$.}

For each of these invariants an associated conservation law (often called a moment equation or continuity equation) is satisfied. For mass it relates the mass density (or charge density) $\rho(t,x)$ with the momentum density (or current density) $j(t,x)$ as follows
$$ \partial_t \rho + \nabla_x \cdot j=0, \qquad \rho(t,x)=\int_{\Omega_v} f(t,x,v) \,dv, \qquad j(t,x) = \int_{\Omega_v} v f(t,x,v) \,dv. $$
We note that this equation is posed in physical space only. Integrating the continuity equation with respect to $x$, we immediately obtain conservation of mass, i.e.~$M(t)=\text{const}$.
\textcolor{black}{
The momentum density satisfies the following continuity equation
$$ \partial_t j + \nabla_x \cdot \sigma(t,x)= -E(t,x)\rho(t,x), \qquad j(t,x)=\int_{\Omega_v} v f(t,x,v) \,dv, \qquad \sigma(t,x)= \int_{\Omega_v} (v \otimes v) f(t,x,v) \,dv.$$
The conservation of the momentum is obtained by recognizing that $ E(1 -\rho) = \nabla \cdot (E \otimes E - \frac{1}{2} E^2)$ and integrating in physical space.
Due to the normalization of the particle-density function we have $ \int E dx = 0$.}\\
For the energy density
\begin{equation} \label{e_expr}
e(t,x)= \frac{1}{2} \int_{\Omega_v} v^2 f(t,x,v) \,dv + \frac{1}{2} E(t,x)^2 
\end{equation}
we have that the following continuity equation
$$ \partial_t e(t,x) + \nabla_x \cdot Q(t,x) = E(t,x) \cdot (\partial_t E(t,x) -j(t,x)), \quad Q(t,x) = \frac{1}{2} \int_{\Omega_v}v v^2 f(t,x,v) \,dv$$
is satisfied. From the electrostatic version of Amp\`ere's law $\partial_t E(t,x) =j(t,x)$, the global conservation of the energy $\mathcal{E}$ is then obtained as follows
$$\partial_t \mathcal{E} = \partial_t \int e(t,x) dx = \int E(t,x) \cdot (\partial_t E(t,x) -j(t,x)) \,dv = 0.$$
In a similar way integrating in space implies conservation of momentum. For more details we refer the reader to \cite{EJ}.

\section{Conservative dynamical low-rank scheme}
\label{sec3}
This section is dedicated to providing a summary of the conservative dynamical low-rank approach introduced in \cite{EJ}, which is the starting point of our work. The density function $f(t,x,v)$ is represented in the following form
\begin{equation*}
f(t,x,v)=f_{0v}(v) \sum_{i,j=1}^r X_i(t,x) S_{ij}(t) V_j(t,v),
\end{equation*}
where $r$ is the rank of the approximation and $f_{0v}$ is a weight function. For the low-rank factors, we assume that $S \in \mathbb{R}^{r \times r}$, $X_i(t, \cdot) \in L^2(\Omega_x)$, and $V_j(t, \cdot) \in L^2(\Omega_v, f_{0v})$, i.e. to lie in the $L^2$ space weighted by $f_{0v}$, with corresponding (weighted) inner products defined as
$$ \langle X_i(t,\cdot), X_j(t,\cdot) \rangle_x = \int_{\Omega_x} X_i(t,x) X_j(t,x) \,dx$$
and
$$ \langle V_i(t,\cdot), V_j(t,\cdot) \rangle_v = \int_{\Omega_v} f_{0v}(v) V_i(t,v) V_j(t,v) \,dv.$$
\textcolor{black}{
Moreover, for $w\in L^2(\Omega_v, f_{0v})$, we use the notation
$$
\|w\| = \sqrt{\langle w,w\rangle_v}.
$$}
In order to obtain a mass, momentum, and energy conservative scheme we require that $1$, each component of $v$, and $v^2$ lie in $L^2(\Omega_v, f_{0v})$. 
Thus, any choice of $f_{0v}$ that decays sufficiently fast as $\Vert v \Vert_{\mathbb R^d} \to \infty$ is sufficient.

The fundamental observation in \cite{EJ} is that if $1$, $v$, and $v^2$ lie in the approximation space then the dynamical low-rank integrator mimicks the continuous system and the proof that shows mass, momentum, and energy conservation for the Vlasov--Poisson equations can be easily applied directly to the low-rank approximation as well. To accomplish this we fix a (small) number, say $m$, of basis functions $V_j$. The basis functions that will remain unchanged by the low-rank integrator are denoted as follows
$$U_a(v)=V_a(t,v), \quad  1 \leq a \leq m.$$
The remaining functions are denoted by
$$ W_p(t,v)=V_p(t,v), \quad m < p\le r$$
and will vary according to the low-rank integrator.

It must be emphasized that orthogonality conditions are required between all of those function (both fixed and varying), i.e. we must have
$$ \langle U_a, U_b \rangle_v = \delta_{ab},\quad 1\le a,b \le m \qquad \text{and} \qquad \langle W_p,V_j \rangle_{\textcolor{black}{v}} = \delta_{pj}, \quad m<p\le r, \ 1\le j \le r.$$
The choice of $f_{0v} = \exp{(-v^2 /2)}$, $m = 3$, $U_1(v) = 1 / \Vert 1 \Vert $, $U_2(v) = v / \Vert v \Vert $ and $U_3(v) = (v^2-1) / \Vert v^2-1 \Vert $ fulfills the constraints and guarantees mass, momentum and energy preservation in $1+1$ dimension. For three-dimensional problems there are three momenta that are conserved, i.e.~we set $U_1(v) = 1 / \Vert 1 \Vert $, $U_2(v) = v_1 / \Vert v_1 \Vert $, $U_3(v) =  v_2 /\Vert v_2\Vert$, $U_4(v) = v_3 / \Vert v_3 \Vert$, and $U_5(v) = (v^2-1) / \Vert v^2-1 \Vert$ and thus $m=5$.

The main issue then is to derive equations of motion that keep the first $m$ basis functions fixed, while respecting the orthogonality constraints. The corresponding Galerkin condition is given in \cite{EJ} and results in the following equations of motion
\begin{align}
\sum_i \dot{X}_i S_{ik} &= \left( V_k, \text{RHS}\right)_v -\sum_i {X}_i \dot{S}_{ik} \label{eq:x}, \qquad 1 \leq k \leq r\\
\sum_{ip} S_{iq} S_{ip} \dot{W}_p&=\frac{1}{f_{0v}} \sum_i S_{iq} \textcolor{black}{\langle} X_i, \text{RHS} \textcolor{black}{\rangle}_x - \sum_{il} S_{iq} \dot{S}_{il} V_l,\qquad m+1 \leq q \leq r \label{eq:w}\\
\dot{S}_{kl} &= \left( X_kV_l, \text{RHS}\right)_{xv}, \qquad 1 \leq k,l \leq r, \label{eq:s}
\end{align}
where 
\[ 
    \textcolor{black}{\langle} f, g \textcolor{black}{\rangle}_x = \int_{\Omega_x} f g \,dx, \qquad (f, g)_v = \int_{\Omega_v} f g \,dv, \qquad (f, g)_{xv} = \int_{\Omega} f g \,d(x,v).
\]
Plugging in $\text{RHS}=-v \cdot \nabla_x f+E \cdot \nabla_v f$ we obtain
\begin{align*}
\sum_i \dot{X}_i S_{ik} &= - \sum_{ij} c^1_{kj} \cdot \nabla_x X_i S_{ij} + \sum_{ij} (c^2_{kj} \cdot E)  X_i S_{ij}-\sum_i {X}_i \dot{S}_{ik},\\
\sum_{ip} S_{iq} S_{ip} \dot{W}_p&=
\sum_i S_{iq} \left(\sum_{kl} d_{ik}^1[E] \cdot [\nabla_v(S_{kl} V_l) + \nabla_v(\log f_{0v}) S_{kl}V_l]  - \sum_{kl}(v \cdot d_{ik}^2)S_{kl} V_l \right)
 - \sum_{il} S_{iq} \dot{S}_{il} V_l, \\
\dot{S}_{kl}&=-\sum_{ij} (d_{ki}^2 \cdot c_{lj}^1) S_{ij} + \sum_{ij} (d_{ki}^1[E] \cdot c_{lj}^2) S_{ij},
\end{align*}
where the following coefficients are defined by taking the inner products componentwise
$$ c^1_{kj} = \langle V_k, v V_j \rangle_v, \quad \quad  c^2_{kj} = \left( V_k, \nabla_v(f_{0v} V_j) \right)_v $$
and
$$ d_{ik}^1[E] = \langle X_i, E X_k  \rangle_x, \quad \quad
d_{ik}^2 = \langle X_i, \nabla_x X_k \rangle_x. $$

The resulting equations are mass, momentum, and energy conservative. In order to turn this into a viable numerical scheme, a suitable time and space discretization has to be employed. With respect to space discretization this is relatively straightforward. The only requirement is that a discrete version of integration by parts holds true. This is the case for a range of methods. For example, centered finite differences, Fourier spectral methods, or discontinuous Galerkin schemes with centered flux satisfy this requirement. For the time integration the explicit conservative Euler scheme described in \cite{EJ} conserves mass and momentum up to machine precision (but not energy). In a similar way implicit schemes that conserve all three invariants can be obtained. The resulting numerical scheme satisfies discrete moment equations that once integrated (or rather summed) in physical space imply the global invariant.  For more details we refer the reader to \cite{EJ}.

The disadvantage of this approach, however, is that in order to solve for $X_i$ and $W_i$ in equations \eqref{eq:x}-\eqref{eq:w} we have to invert $S$. For increasingly accurate approximations this becomes increasingly ill-conditioned as $S$ contains small singular values. Thus, we do not obtain a robust integrator in the sense that choosing a rank that is too large compared to what is required for the numerical simulation can cause issues. We further note that since the conservative dynamical low-rank algorithm uses a different set of equations of motions it turns out to be incompatible with the projector splitting integrator proposed in \cite{LO14}, which in fact was the first robust dynamical low-rank integrator that has been constructed.

\section{The robust and conservative dynamical low-rank algorithm} \label{sec:robustcons}

Our goal now is to construct a robust and conservative dynamical low-rank algorithm. We start with the unconventional integrator that has recently been introduced in \cite{ceruti2020unconventional}. The main idea is to discretize equations \eqref{eq:x} and \eqref{eq:w} as follows:
\begin{align}
K_k^{n+1} &= K_k^n+\tau \left(V_k^n, \text{RHS}^n \right)_v, \qquad 1 \leq k \leq r,\label{eq:unconv_x} \\
L_q^{n+1} &=L_q^n+\frac{\tau}{f_{0v}} \sum_i S^n_{iq} \textcolor{black}{\langle} X_i^n,\text{RHS}^n \textcolor{black}{\rangle} _x-\tau \sum_{il}S_{iq}^n \left(X_i^n V_l^n, \text{RHS}^n \right)_{\textcolor{black}{xv}} V_l^n, \qquad m+1 \leq q \leq r\label{eq:unconv_w}
\end{align}
where
$$ \text{RHS} = -v \cdot \nabla_x f^n + E^n \cdot \nabla_v f^n, \qquad \qquad K_k^n=\sum_i X_i^n S_{ik}^n, \qquad \text{and} \quad L^n_q=\sum_{ip} S^n_{iq}S^n_{ip}W_p^{n}$$
and $\tau$ is the time step size.

We then perform a QR decomposition to
 $$K_k^{n+1} =  \sum_i X_i^{n+1} R_{ik}^1$$ and $$L_q^{n+1}= \sum_p W_p^{n+1} R_{pq}^2$$
 to obtain $X^{n+1}$ and $W^{n+1}$, discarding the $R$ parts, i.e.  $R_{ik}^1$ and $R_{pq}^2$, of the decomposition.

The main insight here is that the approximation spaces,
\[ \text{span}\{X_{i}^{n+1}\}_{i=1}^r, \qquad \text{span}\{ V_j^{n+1} \}_{j=1}^r = \text{span}\{ U_1, \ldots, U_m, W_{m+1}^{n+1}, \ldots, W_{r}^{n+1} \}, \] are uniquely defined by $K_k^{n+1}$  and $L_q^{n+1}$. The QR decomposition allows us to obtain a basis independent of how the singular values of $S$ look like, i.e.~in a robust way.

The final step is then to determine $S^{n+1}$ as follows
\textcolor{black}{
\begin{equation}
S_{kl}^{n+1}=\sum_{ij} M_{ki} S^n_{ij} N^{T}_{jl}+ \tau \left(X_k^{n+1} V_l^{n+1}, \text{RHS}
\left[f= f_{0v}\textstyle\sum_{kijl}X^{n+1}_k M_{ki}S^n_{ij} N^T_{jl} V^{n+1}_l\right] \right)_{xv},  \label{eq:unconv_S}
\end{equation}
}
where
\begin{equation}
\label{def_MN}
 M_{ki}=\langle  X_k^{n+1}, X_i^n\rangle_x, \quad N^T_{jl}=\langle  V_j^{n}, V_l^{n+1}\rangle_v.
 \end{equation}
\textcolor{black}{The notation $\text{RHS}\left[f= f_{0v}\textstyle\sum_{kijl}X^{n+1}_k M_{ki}S^n_{ij} N^T_{jl} V^{n+1}_l\right]$ means that we compute $\text{RHS}=-v \cdot \nabla_x f+E \cdot \nabla_v f$ by substituting $f= f_{0v}\textstyle\sum_{kijl}X^{n+1}_k M_{ki}S^n_{ij} N^T_{jl} V^{n+1}_l$}.

Note that the $M S^n N$ term in equation \eqref{eq:unconv_S} just expresses the solution at time $t^n$ using the already computed basis functions at time $t^{n+1}$. Specifically, the unconventional integrator makes the following approximation
\begin{equation}
    \sum_{ij} X_i^n S_{ij}^n V_j^n \approx \sum_{kijl} X_k^{n+1}M_{ki} S_{ij}^n N^T_{jl} V_l^{n+1}.
    \label{eq:MSN}
\end{equation}
\textcolor{black}{That is, we project the coefficient matrix $S^{n}_{ij}$ onto the updated basis spanned by $V^{n+1}_j$ and $X^{n+1}_i$ and then solve equation \eqref{eq:s}.}
This unconventional integrator for the dynamical low-rank method is robust with respect to the small singular values of $S$, since no matrix inversion needs to be performed.

However, even when applied to the equations of motion of the conservative dynamical low-rank integrator (as we have done here), it destroys conservation of mass, momentum, and energy. The reason for this is twofold:
\begin{enumerate}
    \item The projection of the subspaces $\text{span}\{X_i^n\}$ and $\text{span}\{V_j^n\}$ onto $\text{span}\{X_i^{n+1}\}$ and $\text{span}\{V_j^{n+1}\}$, which is performed in equation \eqref{eq:unconv_S}, does not conserve, in general, any of the invariants. That is, there is no guarantee that the left-hand side and the right-hand side of equation \eqref{eq:MSN} have, e.g., the same mass.
    \item The local conservation laws for mass is of the form $\partial_t \rho + \nabla \cdot j = 0$. While $j$ lies in $\text{span}\{X_i^n\}$ this is, in general, not true for $\nabla \cdot j$. In fact, the unconventional integrator approximates $\nabla \cdot j$ by a function in $\text{span}\{X_i^n\}$ and thus destroys the local conservation law. In a different setting this has already been observed in \cite{EL18_cons}.
\end{enumerate}

Fortunately, the unconventional integrator allows us to relatively easily augment the approximation space by additional basis functions. We consider the approximation spaces $\widetilde{X}^{n+1}$ and $\widetilde{V}^{n+1}$ as follows
\[ \widetilde{X}^{n+1} = (\widetilde{X}_1^{n+1}, \widetilde{X}_2^{n+1}, \dots),
    \qquad \widetilde{V}^{n+1} = (\widetilde{V}_1^{n+1}, \widetilde{V}_2^{n+1}, \dots), \]
where $(\widetilde{X}^{n+1}_j)_j$ is an orthonormal basis of $\text{span}\{ X_i^n, \nabla X_i^n, K_i^{n+1}\}$ and $(\widetilde{V}^{n+1}_j)_j$ is an orthonormal basis of $\text{span}\{ U, L_p^{n+1}, V_p^n\}$, respectively.

\textcolor{black}{Thanks to the augmentation of the basis in the proposed method, the following projections are exact. That is, recalling definition \eqref{def_MN}, we have
\begin{equation}
\label{proj_x}
P_{x,n+1}(X_j^n) \coloneqq \sum_i \langle  \widetilde{X}_i^{n+1}, X_j^n\rangle_x  \widetilde{X}_i^{n+1} = \sum_i M_{ij} \widetilde{X}_i^{n+1}  =   X_j^n,
\end{equation}
\begin{equation}
\label{proj_v}
P_{v,n+1}(V_l^{n}) \coloneqq \sum_m  \langle V_l^n, \widetilde{V}_m^{n+1}\rangle_v \widetilde{V}_m^{n+1} =
\sum_m N^T_{lm} \widetilde{V}_m^{n+1} =  V_l^{n},
\end{equation}
and
\begin{equation}
\label{proj_gx}
P_{x,n+1}(\nabla_x X_j^{n}) = \sum_k  \langle \widetilde{X}_k^{n+1}, \nabla_x X_j^n   \rangle_x  \widetilde{X}_k^{n+1} = \nabla_x X_j^{n}.
\end{equation}}

\begin{algorithm}
\caption{Robust and conservative dynamical low-rank integrator. \label{algorithm}}
\begin{enumerate}
    \item[]
    \item[] \hspace{-0.7cm}\textbf{Conservative unconventional dynamical low-rank integrator:}  \vspace{0.2cm}
\item Given $f^n = f_{0v} \sum_{ij} X_i^n S_{ij}^n V_j^n$, compute $K^{n+1}$ and $L^{n+1}$ from equations \eqref{eq:unconv_x} and \eqref{eq:unconv_w}, respectively.
\item 
 Compute $\widetilde{X}^{n+1} \in \mathbb{R}^{n_x \times 3r}$ by orthogonalization of the matrix $ \left[ X^n \;\; \nabla X^n \;\; K^{n+1}  \right]$ using a QR decomposition.
\item Compute $\widetilde{V}^{n+1}$ by orthogonalization of the matrix $\left[ U \;\;   L^{n+1}  \;\; W^n  \right]  $ using a QR decomposition. Note that $\widetilde{V}^{n+1}=\left[U \;\; \widetilde{W}^{n+1} \right] \in \mathbb{R}^{n_v {\times (2r-m)}}$, where $\widetilde{W}^{n+1} = [ \widetilde{V}^{n+1}_{m+1}, \ldots, \widetilde{V}_{r}^{n+1} ]$.
\item Compute $\widetilde{S}^{n+1}$ from equation \eqref{eq:unconv_S}.

\item[]
\item[] \hspace{-0.7cm}\textbf{Conservative truncation to rank $r$:} \vspace{0.2cm}
\item Compute $\widetilde{K} = \widetilde{X}^{n+1} \widetilde{S}^{n+1}$
and distribute it into two parts $$\widetilde{K} =[ \widetilde{K}^{cons} \; \widetilde{K}^{rem}],$$ where $\widetilde{K}^{cons}$ consists of the first $m$ columns of $\widetilde{K}$ and $\widetilde{K}^{rem}$ of the remaining columns.
\item Perform a QR decomposition of $\widetilde{K}^{cons}$, getting
$$ \widetilde{K}^{cons} = X^{cons} S^{cons}.$$
\item Perform a QR decomposition of $\widetilde{K}^{rem}$, getting
$$ \widetilde{K}^{rem} = \widetilde{X}^{rem} \widetilde{S}^{rem}.$$

\item Compute the singular value decomposition (SVD) of $\widetilde{S}^{rem}$, keep the largest $r-m$ singular values
$$ \widetilde{S}^{rem} \approx \hat{U}\hat{S} \hat{W}^T $$
and compute
$$X^{rem} = \widetilde{X}^{rem} \hat{U}, \quad S^{rem}= \hat{S}, \quad W^{n+1} =  \widetilde{W}^{n+1} \hat{W}.$$
\item Set $$V^{n+1} = \left[U\; W^{n+1} \right]. $$
\item Set $ \hat{X} = \left[ X^{cons} \; X^{rem} \right] $ and perform a QR decomposition $$\hat{X} = X^{n+1} R.$$

\item Set $$S^{n+1} = R \begin{bmatrix} S^{cons} & 0\\
0 & S^{rem}
\end{bmatrix}.$$

\item The computed approximation at time $t^{n+1}$ is then given by
$$f^{n+1} = \textcolor{black}{f_{0v}}\sum_{ij} X^{n+1}_i S^{n+1}_{ij} V^{n+1}_j.$$
\end{enumerate}
\end{algorithm}
 The resulting unconventional integrator then proceeds as stated in steps 1-4 of Algorithm \ref{algorithm}.
Note that this procedure increases the rank in every time step. Thus, we have to perform an appropriate truncation, which must be conservative and keeps the dominant singular values. We accomplish this by first projecting the obtained approximation on the $U_a$ in an exact way. The remainder is then treated by a singular value decomposition and only the largest $r-m$ singular values are kept.

\textcolor{black}{ In the following presentation we will rely on a notation where, after an appropriate discretization in space, the objects $X^{n}$, $K^n$, \ldots\ are considered as $n_x \times r$  matrices and the objects $V^{n}$ are considered as $n_v \times r$ matrices. This will significantly simplify the conservative truncation algorithm. Note, however, that in principle all operations used (such as rank, orthonormalization, QR, SVD) can also be directly applied to the functions ${X_i^n}$, etc.}

\textcolor{black}{In detail, we start by computing $\widetilde{K} = \widetilde{X}^{n+1} \widetilde{S}^{n+1}$
and distribute it into two parts:  $\widetilde{K}^{cons}$ consists of the largest $m$ columns of $\widetilde{K}$ and $\widetilde{K}^{rem}$ consists of the remaining columns.
 Performing a QR decomposition of $\widetilde{K}^{rem}$, we get 
$$ \widetilde{K}^{rem} = \widetilde{X}^{rem} \widetilde{S}^{rem}.$$
 Computing the singular value decomposition (SVD) of $\widetilde{S}^{rem}$, and keeping the first $r-m$ singular values, we obtain 
$$ \widetilde{S}^{rem} \approx \hat{U}\hat{S} \hat{W}^T. $$
We then set
$$X^{rem} = \widetilde{X}^{rem} \hat{U}, \quad S^{rem}= \hat{S}, \quad W^{n+1} =  \widetilde{W}^{n+1} \hat{W}.$$
 The new basis matrix $V^{n+1}$ is obtained by adding the fixed columns, i.e. $V^{n+1} = \left[U\; W^{n+1} \right]. $
Via a QR decomposition of $\widetilde{K}^{cons}$, we then compute 
$$ \widetilde{K}^{cons} = X^{cons} S^{cons}.$$
Finally, we set $ \hat{X} = \left[ X^{cons} \; X^{rem} \right] $ and compute $X^{n+1}$ via a QR decomposition $$\hat{X} = X^{n+1} R.$$  The final matrix of the coefficient $S^{n+1}$ is then defined as follows $$S^{n+1} = R \begin{bmatrix} S^{cons} & 0\\
0 & S^{rem}
\end{bmatrix}.$$}
That this procedure (see also steps 5-12 of Algorithm~\ref{algorithm}) is conservative can be seen as follows:
\begin{align*}
\langle  U_a, f^{n+1}   \rangle_v &= \langle U_a, \sum_{i,j} X_i^{n+1} S^{n+1}_{ij} V_j^{n+1} \rangle_v =  \sum_{i} X_i^{n+1} S^{n+1}_{ia} = \sum_{i} X_i^{n+1} \sum_j R_{ij}   \begin{bmatrix} S^{cons} & 0\\
0 & S^{rem}
\end{bmatrix}_{ja} \\
&= \sum_{j=1}^m \sum_i X_i^{n+1} R_{ij} S^{cons}_{ja} = \sum_{j=1}^m \hat{X}_j S_{ja}^{cons} = \sum_b X_b^{cons} S_{ba}^{cons} = \widetilde{K}_a.
\end{align*}

For notational simplicity we, from now on, will omit the tilde from $X^{n+1}$ and $V^{n+1}$. This is justified since we already know that the truncation step does not destroy conservation.

\subsection{Discrete conservation laws}

We now show that the entire algorithm is in fact mass and momentum conservative. 
\begin{theorem}
\label{thm_1}
 Algorithm \ref{algorithm}, for $m \geq 1$, satisfies the following discrete version of the continuity equation
    \begin{equation}
    \label{cont_d_rho}
     \frac{\rho^{n+1}-\rho^n}{\tau} + \nabla_x \cdot j^n = 0 
     \end{equation}
and thus preserves total mass.
\end{theorem}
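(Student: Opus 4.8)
The plan is to track the charge density $\rho^n = \langle 1, f^n\rangle_v$ (up to the constant $\|1\|$, since $U_1 = 1/\|1\|$) through one step of the algorithm and show the discrete continuity equation \eqref{cont_d_rho} holds exactly. The key leverage is that $m \ge 1$ means $U_1 \propto 1$ is one of the fixed basis functions, so taking the $v$-inner product of $f^{n+1}$ with $1$ picks out only the first column of the coefficient structure, which by the conservative-truncation identity established just above the theorem equals $\widetilde{K}_1$, i.e. the first column of $\widetilde{K} = \widetilde{X}^{n+1}\widetilde{S}^{n+1}$. So the first reduction is: $\rho^{n+1}$ (suitably normalized) equals $\langle 1, f_{0v}\rangle_v \cdot$(first column of $\widetilde X^{n+1}\widetilde S^{n+1}$), and I only need to understand the first column of $\widetilde S^{n+1}$ from equation \eqref{eq:unconv_S}, with the augmented bases $\widetilde X^{n+1}, \widetilde V^{n+1}$ in place.

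Next I would expand $\widetilde S^{n+1}$ using \eqref{eq:unconv_S}: the first term is $\widetilde M \widetilde S^n \widetilde N^T$ and the second is $\tau(\widetilde X^{n+1}_k \widetilde V^{n+1}_l, \mathrm{RHS}[f^n])_{xv}$, where crucially the $f$ substituted into RHS is, by the exact projections \eqref{proj_x}–\eqref{proj_v}, exactly $f^n = f_{0v}\sum_{ij}X_i^n S_{ij}^n V_j^n$. For the first term, contracting with $\widetilde X^{n+1}$ on the left and using \eqref{proj_x} gives $\widetilde X^{n+1}\widetilde M \widetilde S^n \widetilde N^T = \sum_{ij} X_i^n S_{ij}^n \widetilde V^{n+1}_? \ldots$; more precisely, after also accounting for the $\widetilde N^T$ factor and \eqref{proj_v}, this term reconstructs $f^n$ exactly when further contracted appropriately — so its contribution to $\langle 1, \cdot\rangle_v$ of the reconstructed function is exactly $\rho^n$. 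For the second term, I take RHS${}=-v\cdot\nabla_x f^n + E^n\cdot\nabla_v f^n$ and compute $\langle 1, \cdot\rangle_v$: the electric-field term $\langle 1, E^n\cdot\nabla_v f^n\rangle_v$ vanishes by integration by parts in $v$ (boundary terms die because $f_{0v}$ decays), and $\langle 1, -v\cdot\nabla_x f^n\rangle_v = -\nabla_x\cdot\langle v, f^n\rangle_v = -\nabla_x\cdot j^n$. Assembling: the first column of $\widetilde X^{n+1}\widetilde S^{n+1}$, read against $1$ in $v$, equals $\rho^n - \tau\nabla_x\cdot j^n$, which is \eqref{cont_d_rho}. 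Integrating (summing) over $\Omega_x$ and using that $\nabla_x\cdot j^n$ integrates to zero then gives $M^{n+1} = M^n$.

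The step I expect to be the main obstacle is making the first reduction fully rigorous at the level of the augmented-then-truncated bases: one must check that (i) $1$ genuinely stays in the span of $U$ throughout (so that the inner product with $1$ really does isolate the "conservative" block handled by $\widetilde K^{cons}$), and (ii) the identity $\langle U_a, f^{n+1}\rangle_v = \widetilde K_a$ from the paragraph above the theorem is applied with the correct normalization and for $a=1$ specifically, so that $\rho^{n+1}$ is recovered with no truncation error. Equally delicate is verifying that the $\widetilde M\widetilde S^n\widetilde N^T$ term contributes exactly $\rho^n$ and not merely a projection of it — this is where the exactness statements \eqref{proj_x} and \eqref{proj_v} do the real work, and I would spell out that $\sum_{ij}\langle 1, f_{0v} \widetilde V^{n+1}_l\rangle_v (\widetilde X^{n+1}\widetilde M \widetilde S^n\widetilde N^T)_{?l}$ collapses to $\langle 1, f^n\rangle_v$ because $\nabla_x X_i^n$ and $X_i^n$ (resp. $V_j^n$) lie in the augmented $x$-space (resp. $v$-space) by construction. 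The RHS manipulation itself is routine integration by parts; the bookkeeping of which basis indices survive the augmentation is the part that needs care.
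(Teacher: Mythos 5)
Your proposal is correct and follows essentially the same route as the paper's proof: reduce $\rho^{n+1}$ to the first column of $\widetilde{X}^{n+1}\widetilde{S}^{n+1}$ via the conservative truncation identity, use the orthonormality $\langle V_j^n, U_1\rangle_v=\delta_{j1}$ together with the exact projections \eqref{proj_x}--\eqref{proj_v} to recover $\rho^n$ from the $MS^nN^T$ term, kill the electric-field contribution by integration by parts in $v$, and identify the transport contribution with $-\nabla_x\cdot j^n$ using \eqref{proj_gx}. The delicate points you flag (exactness of the projections, the role of $\nabla_x X_i^n$ in the augmented basis) are precisely the ones the paper resolves the same way.
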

\textcolor{black}{\begin{proof}
Since $V_1 = U_1 = \frac1{\|1\|}$, the mass density $\rho$ is given by
$$
\rho = (f,1)_v = \sum_{k,l} X_k S_{kl} \left\langle V_l,1\right\rangle_v = \sum_k X_i S_{k1} 
    \left\langle U_1,1\right\rangle_v,
$$
thus 
$$\rho^{n+1} =  \|1\| \sum_k X_i^{n+1} S_{k1}^{n+1}.$$
By replacing  $S_{k1}^{n+1}$ with the expression given in \eqref{eq:unconv_S}  and exploiting the properties of the augmented basis (equation \eqref{proj_x}), we get
\begin{align}
\label{rho_1}
 \rho ^{n+1} & =  \Vert 1 \Vert \sum_{k} X_k^{n+1} S_{k1}^{n+1} =  \Vert 1 \Vert \sum_{ij} S_{ij}^n \langle V_j^n, U_1 \rangle_v \sum_k X_k^{n+1} \langle X_k^{n+1}, X_i^{n}  \rangle_x 
      +\tau \Psi\\ \nonumber
&= \Vert 1 \Vert \sum_{i} S_{i1}^n  \sum_k X_k^{n+1} \langle X_k^{n+1}, X_i^{n}  \rangle_x  +\tau  \Psi\\ \nonumber 
&= \Vert 1 \Vert \sum_{i} S_{i1}^n   X_i^{n}  +\tau  \Psi \\ \nonumber 
&= \rho^n  +\tau  \Psi, \nonumber 
\end{align}
where 
\begin{equation}
\label{S1}
\Psi = \sum_k X_k^{n+1} \left(   X_k^{n+1},\text{RHS} \left[ f = f_{0v}\textstyle\sum_{ijlm} X_i^{n+1} M_{ij} S_{jl}^n N^T_{lm} V_m^{n+1}\right]\right)_{xv},
\end{equation}
which shows that
\begin{equation*}
\frac{\rho^{n+1}-\rho^n}{\tau} = \Psi.
\end{equation*}
It remains to verify that
 \begin{equation}
 \Psi= - \nabla_x \cdot j^n 
 \end{equation}
 so that \eqref{cont_d_rho} is satisfied.
Thanks to the augmentation of the basis, we can exploit equations \eqref{proj_x} and \eqref{proj_v}, getting 
\begin{equation} \label{eq:exact}
    \sum_{ijlm} X_i^{n+1} M_{ij} S_{jl}^n N^T_{lm} V_m^{n+1} =  \sum_{jl} X_j^{n}  S_{jl}^n  V_l^{n}.
\end{equation}
This allows us to derive the following identities
\begin{align}
\label{rho_2}
\Bigl(  X_k^{n+1} &,\text{RHS} \Bigl[ f = f_{0v}\sum_{ijlm} X_i^{n+1} M_{ij} S_{jl}^n N^T_{lm} V_m^{n+1}\Bigr]\Bigr)_{xv} \\ \nonumber
&= \Bigl(  X_k^{n+1} ,\text{RHS} \Bigl[ f = f_{0v}\sum_{jl} X_j^{n}  S_{jl}^n  V_l^{n}\Bigr]\Bigr)_{xv} \\ \nonumber
&= \Bigl(   X_k^{n+1} ,-v \cdot f_{0v} \nabla_x  \sum_{jl} X_j^{n} S_{jl}^n  V_l^{n}+ E^{n} \cdot \nabla_v f_{0v} \sum_{jl} X_j^{n} S_{jl}^n  V_l^{n} \Bigr)_{xv}  \\ \nonumber
&= - \sum_{jl} S_{jl}^n 
\int X_k^{n+1}  \nabla_x X_j^{n} dx \cdot \int f_{0v} v V_l^{n}\,dv\\ \nonumber
&\quad +   \sum_{jl} S_{jl}^n 
\int X_k^{n+1} E^{n}   X_j^{n} dx \cdot \int \nabla_v(f_{0v}  V_l^{n}) \,dv\\ \nonumber
&= -\sum_{jl} S_{jl}^n 
\int X_k^{n+1}   \nabla_x X_j^{n} dx \cdot \int f_{0v} v V_l^{n}\,dv. \nonumber
\end{align}
Finally, by substitution of  \eqref{rho_2} in \eqref{S1} and exploiting formula \eqref{proj_gx}, we get  
\begin{align*}
   \Psi
&= - \sum_{jl} S_{jl}^n \sum_k X_k^{n+1}
\int X_k^{n+1}  \nabla_x X_j^{n} dx \cdot \int f_{0v} v V_l^{n} \,dv\\ \nonumber
&=- \sum_{jl} S_{jl}^n \left( \sum_k X_k^{n+1}
 \langle  X_k^{n+1} ,  \nabla_x X_j^{n} \rangle_x  \right) \cdot \int f_{0v} v V_l^{n}\,dv\\ \nonumber
& =- \sum_{jl} S_{jl}^n  \nabla_x X_j^{n}    \cdot \int f_{0v} v V_l^{n}\\ \nonumber
&= -\nabla_x \cdot j^n.
\end{align*}
This proves that \eqref{cont_d_rho} is satisfied by the numerical scheme. Integrating \eqref{cont_d_rho} over $x$, we  obtain conservation of mass.
\end{proof}}

\textcolor{black}{
In order to show conservation of momentum we need, in addition to equations \eqref{proj_x}-\eqref{proj_gx} that were already used in theorem \ref{thm_1} and which were ensured by augmenting the approximation space, that no error is made in projecting $E^n \rho^n$ onto the space spanned by the $\widetilde{X}_k^{n+1}$. This condition is automatically satisfied as we demonstrate in the following lemma.
\begin{lemma} \label{lem:Erho} For all $s \in \{1, \ldots, d\}$ we have $E^n_s \rho^n \in \text{span}\{\widetilde{X}^{n+1}_k\}$. In particular,
    \begin{equation}
    \label{proj_erho}
        P_{x,n+1}(E^n \rho^n) = \sum_k  \langle \widetilde{X}_k^{n+1}, E^n \rho^n  \rangle_x  \widetilde{X}_k^{n+1} = E^n \rho^n.
    \end{equation}
\end{lemma}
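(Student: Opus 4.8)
The plan is to show that $E^n_s\rho^n$ is already contained in the span of the augmented $x$-basis $\widetilde{X}^{n+1}$, and then conclude \eqref{proj_erho} since $P_{x,n+1}$ is by construction the orthogonal projector onto $\text{span}\{\widetilde{X}^{n+1}_k\}$ and hence acts as the identity on that subspace. So the only real content is the membership claim $E^n_s\rho^n\in\text{span}\{\widetilde{X}^{n+1}_k\}$.

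First I would recall that, by the definition of the augmented basis in step 2 of Algorithm~\ref{algorithm}, $\text{span}\{\widetilde{X}^{n+1}_k\}=\text{span}\{X^n_i,\nabla_x X^n_i,K^{n+1}_i\}$. In particular it contains $\text{span}\{X^n_i\}$ and $\text{span}\{\nabla_x X^n_i\}$. Next, as already used in the proof of Theorem~\ref{thm_1}, the charge density is $\rho^n=\|1\|\sum_i S^n_{i1}X^n_i$, so $\rho^n\in\text{span}\{X^n_i\}$, i.e.\ $\rho^n$ is itself one of the (linear combinations of) basis functions $X^n_i$. The key remaining step is to express $E^n_s$ in terms of $\rho^n$: from the Poisson equation $-\Delta\phi^n=1-\rho^n$ and $E^n=-\nabla_x\phi^n$, one has (in the 1+1 dimensional case that is the focus here, and componentwise in general) $\partial_{x}E^n = 1-\rho^n$, hence $\partial_x(E^n_s\rho^n) = (1-\rho^n)\rho^n + E^n_s\,\partial_x\rho^n$. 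Since $\rho^n$ and $\partial_x\rho^n$ both lie in $\text{span}\{X^n_i,\nabla_x X^n_i\}\subseteq\text{span}\{\widetilde{X}^{n+1}_k\}$, this does not immediately close; the cleaner route is to argue directly that after the space discretization the relevant product lies in the discrete span.

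Concretely, I would phrase it as follows. Write $E^n_s\rho^n$ and observe that $\rho^n$ is a fixed linear combination of the $X^n_i$. Multiplying $E^n_s$ by any single $X^n_i$: here one uses that the electric field $E^n$ is determined pointwise (on the spatial grid) from $\rho^n$ via solving the discrete Poisson problem, so $E^n_s$ is a fixed grid function, and $E^n_s\rho^n$ is a fixed grid function built from $\rho^n$. The membership then follows because $E^n_s\rho^n$ coincides (on the grid) with $E^n_s\cdot\bigl(\|1\|\sum_i S^n_{i1}X^n_i\bigr)$, and — crucially — the definition of the augmented basis is chosen so that products $E^n_s X^n_i$ are captured: indeed, looking back at the equations of motion \eqref{eq:unconv_x}--\eqref{eq:unconv_w} and the coefficients $d^1_{ik}[E]$, the force term feeds $E^n$-weighted combinations of the $X^n_i$ into $K^{n+1}$, so that $\text{span}\{K^{n+1}_i\}$ already contains the relevant $E^n$-products. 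The argument therefore reduces to: $E^n_s\rho^n$ is a linear combination of the columns of $K^{n+1}$ (and of $X^n$, $\nabla_x X^n$), all of which span $\widetilde{X}^{n+1}$.

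The main obstacle I anticipate is making the last point rigorous: one must verify precisely which products of $E^n$ with basis functions are guaranteed to lie in $\text{span}\{K^{n+1}_i\}$, using the explicit form of $\text{RHS}^n=-v\cdot\nabla_x f^n+E^n\cdot\nabla_v f^n$ in \eqref{eq:unconv_x}. In particular, $K^{n+1}_k = K^n_k + \tau(V^n_k,\text{RHS}^n)_v$, and $(V^n_k,E^n\cdot\nabla_v f^n)_v$ produces terms of the form $\sum_{ij} d^1_{ik}[E^n]\cdots$ — i.e.\ functions $E^n\,X^n_i$ weighted by inner products in $v$. The delicate part is checking that, with $m\ge 1$ (so that $U_1=1/\|1\|$ is in the $v$-basis), the particular combination that yields $E^n_s\rho^n$ appears; this uses that $\rho^n$ is exactly the $U_1$-moment of $f^n$, that $\nabla_v(f_{0v}U_1)$ and similar terms integrate against $U_1$ in a controlled way, and that $K^n_k\in\text{span}\{X^n_i\}$ anyway. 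Once the membership is established, the conclusion \eqref{proj_erho} is immediate from the projector property $P_{x,n+1}|_{\text{span}\{\widetilde{X}^{n+1}_k\}}=\mathrm{id}$, exactly as \eqref{proj_x}--\eqref{proj_gx} were concluded. I would structure the final write-up so that the bulk of the work is the membership lemma and the projection identity is a one-line corollary.
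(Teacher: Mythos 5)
You have correctly reduced the lemma to the membership claim $E^n_s\rho^n\in\text{span}\{\widetilde{X}^{n+1}_k\}$ and correctly located its source in the force term of the $K$ step \eqref{eq:unconv_x}, but the essential computation --- the one that actually constitutes the proof --- is left undone, and the hints you give for completing it point in the wrong direction. The paper's argument is: write $K^{n+1}_k = K^n_k - \tau\sum_j c^1_{kj}\cdot\nabla_x K^n_j + \tau\sum_j c^2_{kj}\cdot E^n K^n_j$; since $K^{n+1}_k$, $K^n_j$, and $\nabla_x K^n_j$ all lie in $\text{span}\{\widetilde{X}^{n+1}_k\}$ by construction of the augmented basis, so must the remaining force term; then choose $k=s+1$, so that $V^n_{s+1}=U_{s+1}=v_s/\|v_s\|$, and compute by integration by parts
\[
\sum_j \left(E^n\cdot c^2_{s+1,j}\right) K^n_j
= \frac{1}{\|v_s\|}\, E^n\cdot\sum_j\int v_s\,\nabla_v\!\left(f_{0v}K^n_jV^n_j\right)dv
= -\frac{E^n\cdot e^s}{\|v_s\|}\int f_{0v}\sum_j K^n_jV^n_j\,dv
= -\frac{E^n_s\,\rho^n}{\|v_s\|},
\]
with $e^s_i=\delta_{is}$. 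This is exactly the ``delicate part'' that you flag and then defer. Two points in your sketch would mislead you if you tried to carry it out: (i) the relevant fixed basis function is $U_{s+1}\propto v_s$, not $U_1\propto 1$ --- testing against $U_1$ gives $\int\nabla_v(f_{0v}\,\cdot)\,dv=0$ and produces nothing, so the condition you would actually need is $m\ge s+1$ (all momentum components fixed), not $m\ge 1$; and (ii) it is not true in general that the individual products $E^n_s X^n_i$ are ``captured'' by $\text{span}\{K^{n+1}_i\}$ --- only the one specific combination $\sum_j (E^n\cdot c^2_{s+1,j}) K^n_j$ is forced into the span, and the whole content of the lemma is that this combination collapses, via the moment identity above, to a multiple of $E^n_s\rho^n$. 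The detour through the Poisson equation in your second paragraph is, as you yourself note, a dead end and should be dropped.
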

\begin{proof} From the $K$ step \eqref{eq:unconv_x} we get
    \begin{align*}
        K_k^{n+1} & = K_k^{n} + \tau (V_k^n, \text{RHS}^n) =  K_k^n - \tau \sum_{j} c_{kj}^1 \cdot \nabla_x K_j^n + \tau \sum_j c_{kj}^2 \cdot E^n K_j^n.
    \end{align*}
    Clearly, both the first and second term already lie in $\text{span}\{\widetilde{X}^{n+1}_k\}$. For the third term we set $k = s+1$. This yields
    \begin{align*}
        \sum_j c^2_{s+1,j} K_j^n &= \sum_j \left(V_{s+1}, \nabla_v (f_{0v} V_j^n)\right)_v K_j^n \\
                          &= \frac{1}{\Vert v_s\Vert} \sum_j \int v_{s} \nabla_v (f_{0v}  K_j^n V_j^n) \,\mathrm{d}v \\
                          &= -\frac{e^{s}}{\Vert v_s \Vert} \int f_{0v} \sum_j K_j V_j^n \,\mathrm{d}v \\
                          &= -\frac{e^{s}}{\Vert v_s \Vert} \rho^n,
    \end{align*}
    where $e^s_i = \delta_{is}$. Therefore, $E^n_s \rho^n$ lies in $\text{span}\{\widetilde{X}^{n+1}_k\}$, which is the desired result.
\end{proof}
}

\begin{theorem}
\label{thm_2}
 Algorithm \ref{algorithm}, for $m \geq 2$, satisfies the following discrete version of the momentum continuity equation
\begin{equation}
\label{cont_d_j}
\dfrac{j^{n+1}-j^n}{\tau} + \nabla_x \cdot \sigma^n+ E^n \rho^n =0
\end{equation}
and thus preserves the total momentum.
\end{theorem}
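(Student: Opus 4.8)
The plan is to mirror the proof of Theorem \ref{thm_1}, with the first fixed basis function $U_1=1/\|1\|$ replaced by $U_2=v/\|v\|$ (in several space dimensions one treats each momentum component separately using $U_{s+1}=v_s/\|v_s\|$, which is why at least $m\ge 2$ fixed functions are needed). Since $U_2$ is time-independent and orthonormal to the other basis vectors, $v=\|v\|\,V_2^{n}$ at every time level, so the representation $f=f_{0v}\sum_{kl}X_kS_{kl}V_l$ gives the identity $j=\int_{\Omega_v}vf\,dv=\|v\|\sum_kX_kS_{k2}$, hence $j^{n+1}=\|v\|\sum_kX_k^{n+1}S_{k2}^{n+1}$. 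I would then substitute the update formula \eqref{eq:unconv_S} for $S_{k2}^{n+1}$, splitting it into the projection term $\sum_{ij}M_{ki}S_{ij}^nN^T_{j2}$ and the residual term $\tau\bigl(X_k^{n+1}V_2^{n+1},\text{RHS}[f=\dots]\bigr)_{xv}$.

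For the projection term, since $V_2^{n+1}=U_2=V_2^n$ we get $N^T_{j2}=\langle V_j^n,U_2\rangle_v=\delta_{j2}$, so it reduces to $\sum_iM_{ki}S_{i2}^n$, and the exact projection property \eqref{proj_x} yields $\|v\|\sum_kX_k^{n+1}\sum_iM_{ki}S_{i2}^n=\|v\|\sum_iS_{i2}^nX_i^n=j^n$. Thus $j^{n+1}=j^n+\tau\Phi$ with $\Phi=\|v\|\sum_kX_k^{n+1}\bigl(X_k^{n+1}V_2^{n+1},\text{RHS}[f=\dots]\bigr)_{xv}$, and it remains to prove $\Phi=-\nabla_x\cdot\sigma^n-E^n\rho^n$, which is exactly \eqref{cont_d_j}.

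To evaluate $\Phi$, I would first use the exactness identity \eqref{eq:exact} (as in \eqref{rho_2}) to replace the argument of $\text{RHS}$ by the time-$t^n$ representation $f_{0v}\sum_{jl}X_j^nS_{jl}^nV_l^n$, and then split $\text{RHS}=-v\cdot\nabla_xf+E^n\cdot\nabla_vf$ into a transport and an acceleration contribution. In the transport part, integrating out $v$ produces the factor $\langle v^2,V_l^n\rangle_v$ ($v\otimes v$ against $V_l^n$ in several dimensions) and leaves $\sum_kX_k^{n+1}\langle X_k^{n+1},\nabla_xX_j^n\rangle_x$, which collapses to $\nabla_xX_j^n$ by \eqref{proj_gx}; summing gives $-\nabla_x\cdot\sigma^n$ with $\sigma^n=\sum_{jl}S_{jl}^nX_j^n\langle v^2,V_l^n\rangle_v$. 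In the acceleration part, I would integrate by parts in $v$: the boundary term vanishes by the decay of $f_{0v}$, and $\int v\,\partial_v(f_{0v}V_l^n)\,dv=-\int f_{0v}V_l^n\,dv=-\|1\|\delta_{1l}$ because $U_1=1/\|1\|$ lies in the basis. What survives is $\|1\|\sum_jS_{j1}^n\langle X_k^{n+1},E^nX_j^n\rangle_x=\langle X_k^{n+1},E^n\rho^n\rangle_x$ (using $\rho^n=\|1\|\sum_jS_{j1}^nX_j^n$ from the proof of Theorem \ref{thm_1}), and then Lemma \ref{lem:Erho}, i.e.\ \eqref{proj_erho}, gives $\sum_kX_k^{n+1}\langle X_k^{n+1},E^n\rho^n\rangle_x=E^n\rho^n$. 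Combining the two parts yields $\Phi=-\nabla_x\cdot\sigma^n-E^n\rho^n$, hence \eqref{cont_d_j}. Integrating over $x$ then finishes the proof: the divergence term integrates to zero, and $\int_{\Omega_x}E^n\rho^n\,dx=0$ follows from the discrete Gauss law $\nabla_x\cdot E^n=1-\rho^n$ together with $\int_{\Omega_x}E^n\,dx=0$, exactly as in Section \ref{sec2}.

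The main obstacle, just as in the continuous case, is the acceleration term: one must ensure that $E^n\rho^n$ is represented without error in $\text{span}\{\widetilde X_k^{n+1}\}$, which is precisely the content of Lemma \ref{lem:Erho} and the reason the $X$-basis is augmented with $\nabla_xX_i^n$ and $K_i^{n+1}$. A secondary point needing care is the final step, where the vanishing of $\int_{\Omega_x}E^n\rho^n\,dx$ must be inherited at the discrete level; this holds provided the space discretization obeys a discrete integration-by-parts rule (so that $\int_{\Omega_x}E^n\,\nabla_x\cdot E^n\,dx=0$) and the discrete Poisson solve preserves $\int_{\Omega_x}E^n\,dx=0$, which are assumptions already in force for the scheme.
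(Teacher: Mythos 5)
Your proposal is correct and follows essentially the same route as the paper's proof: express $j$ via the fixed basis function $U_2$ as $\|v\|K_2$, use the exact projections \eqref{proj_x}, \eqref{proj_gx} and Lemma \ref{lem:Erho} (i.e.\ \eqref{proj_erho}) to reduce the residual term to $-\nabla_x\cdot\sigma^n-E^n\rho^n$, and conclude total momentum conservation by integrating in $x$ using $E^n(1-\rho^n)=\nabla\cdot(E^n\otimes E^n-\tfrac12(E^n)^2)$ and $\int E^n\,dx=0$. You also correctly identify the key structural point — that the basis augmentation is what makes the projection of $E^n\rho^n$ exact — and the discrete integration-by-parts requirements, so no gaps remain.
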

\begin{proof}
\textcolor{black}{For notational simplicity, we conduct the proof for a one-dimensional velocity space only. We remark, however, that the theorem also holds in dimension $d$ for $m\ge d+1$.}
\textcolor{black}{Since $U_2 = v/\|v\|$ and using $K_j = \sum_i X_i S_{ij}$, the momentum density is given by
$$j = (v, f)_{v} = \|v\| (U_2 ,f)_{v} = \Vert v \Vert K_2.$$
Using a similar argument as in the proof of Theorem \ref{thm_1}, we get the following:
\begin{equation}
\label{j_1}
    \begin{aligned}
j ^ {n+1} & = \Vert v \Vert K_2^{n+1}= \Vert v \Vert\sum_{k} X_k^{n+1} S_{k2}^{n+1}  \\ 
& = \Vert v \Vert\sum_{ij} S_{ij}^n \langle V_j^n, U_2 \rangle \sum_k X_k^{n+1} \langle X_k^{n+1}, X_i^{n}  \rangle_x  +\tau \Lambda\\ 
&= \Vert v \Vert  \sum_{i} S_{i2}^n  \sum_k X_k^{n+1} \langle X_k^{n+1}, X_i^{n}  \rangle_x  + \tau \Lambda \\ 
&= \Vert v \Vert \sum_{i} S_{i2}^n  X_i^{n}   +\tau \Lambda \\ 
&= \Vert v \Vert K_2^n  +\tau \Lambda \\ 
& = j^n   +\tau  \Lambda,\\ 
    \end{aligned}
\end{equation}
where
\begin{equation}
\label{S2}
\Lambda = \Vert v \Vert \sum_k X_k^{n+1} \left(   X_k^{n+1} U_2,\text{RHS} \left[ f = f_{0v}\textstyle\sum_{ijlm} X_i^{n+1} M_{ij} S_{jl}^n N^T_{lm} V_m^{n+1}\right]\right)_{xv}.
\end{equation}
Thus
\begin{equation*}
\frac{j^{n+1}-j^n}{\tau} = \Lambda,
\end{equation*}
 and we now have to prove that 
 \begin{equation}
\Lambda= - \nabla_x \cdot \sigma^n - E^n \rho^n
 \end{equation}
in order that \eqref{cont_d_j} is satisfied.
Using \eqref{proj_erho} allows us to derive the following identities
\begin{equation}
\label{j_2}
    \begin{aligned}
\|v\|\Bigl(   &X_k^{n+1} U_2,\text{RHS} \Bigl[ f= f_{0v}\sum_{ijlm} X_i^{n+1} M_{ij} S_{jl}^n N^T_{lm} V_m^{n+1}\Bigr]\Bigr)_{xv} \\ \
& = \|v\|\Bigl(   X_k^{n+1} U_2,\text{RHS} \Bigl[ f= f_{0v}\sum_{jl} X_j^{n}  S_{jl}^n  V_l^{n}\Bigr]\Bigr)_{xv} \\ \
&=\Bigl(   X_k^{n+1} v,-v \cdot f_{0v} \nabla_x \sum_{jl} X_j^{n}  S_{jl}^n  V_l^{n}+ E^n \cdot \nabla_v f_{0v} \sum_{jl} X_j^{n}  S_{jl}^n  V_l^{n} \Bigr)_{xv}  \\ \
&= -\sum_{jl} S_{jl}^n 
\int X_k^{n+1}  \nabla_x X_j^{n} dx \cdot \int f_{0v} v^2 V_l^{n} dv\\ \
&\quad +   \sum_{jl} S_{jl}^n 
\int X_k^{n+1} E^n   X_j^{n} dx \cdot \int v \nabla_v(f_{0v}  V_l^{n}) dv \\ \
& = -\sum_{jl} S_{jl}^n 
\int X_k^{n+1}  \nabla_x X_j^{n} dx \cdot \int f_{0v} v^2 V_l^{n} dv\\ \
&\quad -   \sum_{jl} S_{jl}^n 
\int X_k^{n+1} E^n   X_j^{n} dx \cdot \int  f_{0v}  V_l^{n} dv.\\ \
\end{aligned}
\end{equation}
The last identity is obtained by integration by parts.
By substitution of \eqref{j_2} in  \eqref{S2} and using the properties \eqref{proj_x}, \eqref{proj_gx}, and \eqref{proj_erho}, we get
\begin{align*}
\Lambda
&=  - \sum_{jl} S_{jl}^n \sum_k X_k^{n+1}
\int X_k^{n+1}  \nabla_x X_j^{n} dx \int f_{0v} v^2 V_l^{n}\,dv  \\ \nonumber
&\quad -\sum_{jl} S_{jl}^n \sum_k X_k^{n+1}
\int X_k^{n+1} E^n X_j^{n} dx  \int f_{0v} V_l^{n}\,dv\\ \nonumber
&=  - \sum_{jl} S_{jl}^n  \sum_k X_k^{n+1}
 \langle  X_k^{n+1},   \nabla_x X_j^{n} \rangle_x   \int f_{0v} v^2 V_l^{n}\,dv  \\ \nonumber
&\quad - \sum_k X_k^{n+1}
\int X_k^{n+1} E^n    \int f_{0v}\sum_{jl}X_j^{n} S_{jl}^n V_l^{n}\,dv dx  \\ \nonumber
& =- \sum_{jl} S_{jl}^n  \nabla_x X_j^{n}     \int f_{0v} v^2 V_l^{n} dv -  \sum_k X_k^{n+1}\langle X_k^{n+1}, E^n \rho^n  \rangle_x \\ \nonumber
&= -\nabla_x \cdot \sigma^n- E^n \rho^n,
\end{align*}
which means that the method satisfies \eqref{cont_d_j}.
By repeating in the discrete setting the considerations of section~\ref{sec2}, for which it holds that $E^n (1-\rho^n) =  \nabla \cdot (E^n \otimes E^n-\frac{1}{2} (E^n)^2)$ and  $ \int E^n dx =0$, we get, by integrating \eqref{cont_d_j}, \textcolor{black}{the conservation of total momentum}.}
\end{proof}

\textcolor{black}{We briefly recall an alternative formulation for the expression of the energy density \eqref{e_expr}, proposed in \cite{EJ}, that will be useful for the proof of the next result. The function $U_3 = (v^2-1)/\|v^2-1\|$ can be written as
$$ v^2 = \Vert v^2 -1 \Vert U_3  + \Vert 1 \Vert U_1.   $$
Thanks to this relation, we have
\begin{align*} \int v^2 f dv  & = \sum_{ij} X_i S_{ij}\left(\Vert v^2 -1 \Vert \langle U_3, V_j \rangle_v  + \Vert 1 \Vert \langle U_1, V_j \rangle_v \right) = \Vert v^2 -1 \Vert  \sum_{i} X_i S_{i3} + \Vert 1 \Vert \sum_{i} X_i S_{i1}\\ \nonumber 
&= \Vert v^2 -1 \Vert K_3  + \Vert 1 \Vert K_1. \end{align*}
Therefore, we can write the energy density as
\begin{equation} \label{eq:ennew} 
    e = \dfrac{1}{2}\left( \Vert v^2 -1 \Vert K_3  + \Vert 1 \Vert K_1\right)  + \dfrac{1}{2}E^2.
\end{equation}
}

\textcolor{black}{
We now turn our attention to energy conservation. In order to derive the discrete version of the continuity equation below it is useful to first show the following lemma (which is similar to Lemma \ref{lem:Erho}):
\begin{lemma} It holds that $E^n \cdot j^n \in \text{span}\{\widetilde{X}^{n+1}_k\}$. In particular,
    \[ P_{x,n+1}(E^n \cdot j^n) = \sum X_k^{n+1} \langle X_k^{n+1}, E^n \cdot j^n \rangle_x =  E^n \cdot j^n. \]
\end{lemma}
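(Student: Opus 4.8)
The plan is to mimic the proof of Lemma~\ref{lem:Erho}, but testing the $K$-step against the basis function $U_3 = (v^2-1)/\|v^2-1\|$ (in $d$ dimensions, against $U_{d+2}$) rather than against the momentum basis functions $U_{s+1}$. The key observation is that the nonlinear term $E\cdot\nabla_v f$ in the right-hand side, when paired with $U_3$, reproduces exactly $E^n\cdot j^n$ up to a constant factor. Consequently $E^n\cdot j^n$ can be written as a fixed linear combination of $K^{n+1}_3$, $K^n_3$, and the functions $\nabla_x K^n_j$, each of which lies in $\text{span}\{X^n_i,\nabla_x X^n_i, K^{n+1}_i\} = \text{span}\{\widetilde X^{n+1}_k\}$ by the construction of the augmented basis.

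Concretely, I would first write the $K$-step \eqref{eq:unconv_x} for $k=3$ in the form $K^{n+1}_3 = K^n_3 - \tau\sum_j c^1_{3j}\cdot\nabla_x K^n_j + \tau\sum_j (c^2_{3j}\cdot E^n) K^n_j$, with $c^1_{3j} = \langle V_3^n, v V^n_j\rangle_v$ and $c^2_{3j} = (V^n_3,\nabla_v(f_{0v}V^n_j))_v$. Next I would evaluate $c^2_{3j}$ by integration by parts: since $U_3 = (v^2-1)/\|v^2-1\|$ and $f_{0v}V^n_j$ decays at infinity, $c^2_{3j} = \frac{1}{\|v^2-1\|}\int (v^2-1)\,\nabla_v(f_{0v}V^n_j)\,dv = -\frac{1}{\|v^2-1\|}\int \nabla_v(v^2)\,f_{0v}V^n_j\,dv = -\frac{2}{\|v^2-1\|}\int v\,f_{0v}V^n_j\,dv$. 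Summing against $K^n_j$ and using $f^n = f_{0v}\sum_j K^n_j V^n_j$ together with $j^n = \int v f^n\,dv$ gives $\sum_j c^2_{3j}K^n_j = -\frac{2}{\|v^2-1\|}\,j^n$, and hence $\sum_j (c^2_{3j}\cdot E^n) K^n_j = -\frac{2}{\|v^2-1\|}\,E^n\cdot j^n$.

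Substituting this back yields $E^n\cdot j^n = \frac{\|v^2-1\|}{2\tau}\bigl(K^n_3 - K^{n+1}_3 - \tau\sum_j c^1_{3j}\cdot\nabla_x K^n_j\bigr)$. Now $K^n_3 = \sum_i X^n_i S^n_{i3}\in\text{span}\{X^n_i\}$, $K^{n+1}_3\in\text{span}\{K^{n+1}_i\}$, and $\nabla_x K^n_j = \sum_i S^n_{ij}\,\nabla_x X^n_i\in\text{span}\{\nabla_x X^n_i\}$; all three subspaces are contained in $\text{span}\{\widetilde X^{n+1}_k\}$, so $E^n\cdot j^n\in\text{span}\{\widetilde X^{n+1}_k\}$. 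Since $\{\widetilde X^{n+1}_k\}$ is orthonormal, the orthogonal projector $P_{x,n+1}$ fixes every element of its span, giving $P_{x,n+1}(E^n\cdot j^n) = E^n\cdot j^n$, as claimed.

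There is no genuinely hard step; the only points requiring care are (i) confirming that the boundary terms in the integration by parts vanish, which follows from the assumed decay of $f_{0v}$, and (ii) keeping the vector/scalar bookkeeping straight when passing between the one-dimensional statement and the $d$-dimensional case — in $d$ dimensions one uses $U_{d+2}$ in place of $U_3$, $\nabla_v(v^2) = 2v$ is a $d$-vector, and its contraction against $E^n K^n_j$ produces the scalar $E^n\cdot j^n$ exactly as in the one-dimensional computation.
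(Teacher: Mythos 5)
Your proposal is correct and follows essentially the same route as the paper: test the $K$-step \eqref{eq:unconv_x} at the index $k=d+2$ corresponding to $U_{d+2}\propto v^2-1$, integrate by parts to identify $\sum_j (c^2_{d+2,j}\cdot E^n)K_j^n = -\tfrac{2}{\|v^2-1\|}E^n\cdot j^n$, and conclude that $E^n\cdot j^n$ is a linear combination of $K^{n+1}_{d+2}$, $K^n_{d+2}$, and $\nabla_x K^n_j$, all of which lie in $\text{span}\{\widetilde X^{n+1}_k\}$ by construction of the augmented basis. The paper's proof is the same argument stated slightly more implicitly (it notes the first two terms lie in the span, so the third must as well), and your care about boundary terms and the $d$-dimensional bookkeeping matches the paper's treatment.
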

\begin{proof} From the $K$ step \eqref{eq:unconv_x} we get
    \begin{align*}
        K_k^{n+1} & = K_k^{n} + \tau (V_k^n, \text{RHS}^n) =  K_k^n - \tau \sum_{j} c_{kj}^1 \cdot \nabla_x K_j^n + \tau \sum_j c_{kj}^2 \cdot E^n K_j^n.
    \end{align*}
    Clearly, both the first and second term already lie in $\text{span}\{\widetilde{X}^{n+1}_k\}$. For the third term we set $k = d+2$ to get
    \begin{align*}
        \sum_j c^2_{d+2,j} K_j^n &= \sum_j \left(V_{d+2}, \nabla_v (f_{0v} V_j^n)\right)_v K_j^n \\
                          &= \frac{1}{\Vert v^2 - 1\Vert} \sum_j \int (v^2-1) \nabla_v (f_{0v}  K_j^n V_j^n) \,\mathrm{d}v \\
                          &= -\frac{2}{\Vert v^2 - 1\Vert}  \int v f_{0v} \sum_j K_j V_j^n \,\mathrm{d}v \\
                          &= -\frac{2}{\Vert v^2 - 1\Vert}  j^n,
    \end{align*}
    Thus, $E^n \cdot j^n$ lies in $\text{span}\{\widetilde{X}^{n+1}_k\}$, which is the desired result.
\end{proof}
}

\textcolor{black}{The main result with respect to energy conservation is stated in the following theorem.} \textcolor{black}{Note that the proposed integrator does not conserve energy. However, we show that our scheme satisfies the discrete continuity equation \eqref{eq_cont_e},  which introduces an error of size $\mathcal{O}(\tau^2)$ in energy per time step, similarly to the fully explicit (but not robust) conservative Euler scheme considered in \cite{EJ}.}

\begin{theorem}\label{thm_3} Algorithm \ref{algorithm}, for $m \geq 2+d$, satisfies the following discrete version of the energy continuity equation
\begin{equation}
\label{eq_cont_e}
    \dfrac{e^{n+1} - e^n}{\tau} + \nabla_x \cdot Q^n - E^n \cdot \left(\frac{E^{n+1}-E^n}{\tau}  - j^n \right)=  \dfrac{(E^{n+1}-E^n)^2}{2 \tau}
                            =   \mathcal{O}(\tau).
\end{equation}
\end{theorem}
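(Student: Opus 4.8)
The plan is to mimic the structure of the energy proof from \cite{EJ}, exactly as Theorems \ref{thm_1} and \ref{thm_2} mimicked the mass and momentum proofs. Using the alternative expression \eqref{eq:ennew} for the energy density, $e = \frac12(\Vert v^2-1\Vert K_3 + \Vert 1\Vert K_1) + \frac12 E^2$, I would first establish a discrete continuity equation for the kinetic part $\tfrac12(\Vert v^2-1\Vert K_3 + \Vert 1\Vert K_1)$. To do this, I would write $S_{k1}^{n+1}$ and $S_{k3}^{n+1}$ via \eqref{eq:unconv_S}, exploit the exact projection \eqref{proj_x} to collapse the $M S^n N^T$ term back to $X_i^n S_{i1}^n$ (resp.\ $S_{i3}^n$), and thereby write the kinetic energy density increment as $\tau$ times an explicit RHS-contribution $\Theta$, analogous to $\Psi$ in \eqref{S1} and $\Lambda$ in \eqref{S2}. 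Concretely, the combination $\tfrac12(\Vert v^2-1\Vert \,(X_k^{n+1},\text{RHS})_{xv}\text{-term at }l=3 + \Vert 1\Vert\,\text{-term at }l=1)$ should reproduce $(X_k^{n+1}\,\tfrac12 v^2, \text{RHS}[\,\cdot\,])_{xv}$ because $\tfrac12 v^2 = \tfrac12(\Vert v^2-1\Vert U_3 + \Vert 1\Vert U_1)$; here I need $m \ge 3 = d+2$ so that both $U_1$ and $U_3$ are among the fixed basis functions.

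Next I would evaluate this RHS-contribution using the exact projections. By \eqref{eq:exact} the argument $f = f_{0v}\sum X_i^{n+1}M_{ij}S_{jl}^n N^T_{lm}V_m^{n+1}$ equals $f_{0v}\sum X_j^n S_{jl}^n V_l^n$, so splitting $\text{RHS} = -v\cdot\nabla_x f + E^n\cdot\nabla_v f$ and integrating the $v$-transport term by parts in $x$ and the $E$-acceleration term by parts in $v$, I get two contributions: one of the form $-\sum_{jl}S_{jl}^n \langle X_k^{n+1},\nabla_x X_j^n\rangle_x \cdot \int f_{0v}\, \tfrac12 v v^2 V_l^n\,dv$, which after applying \eqref{proj_gx} and summing against $X_k^{n+1}$ becomes exactly $-\nabla_x\cdot Q^n$; and one of the form $+\sum_{jl}S_{jl}^n \langle X_k^{n+1}, E^n X_j^n\rangle_x \cdot \int \nabla_v(f_{0v}V_l^n)\,\tfrac12 v^2\,dv$. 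For the latter, integration by parts in $v$ turns $\int \tfrac12 v^2 \nabla_v(f_{0v}V_l^n)\,dv$ into $-\int v f_{0v}V_l^n\,dv$, giving $-\sum_k X_k^{n+1}\langle X_k^{n+1}, E^n\cdot j^n\rangle_x$; by the last lemma $E^n\cdot j^n \in \operatorname{span}\{\widetilde X_k^{n+1}\}$ (this needs $m \ge d+2$), so this collapses to $-E^n\cdot j^n$. Hence $\Theta = -\nabla_x\cdot Q^n - E^n\cdot j^n$, i.e.
\[
\frac{1}{2\tau}\Bigl(\Vert v^2-1\Vert(K_3^{n+1}-K_3^n) + \Vert 1\Vert(K_1^{n+1}-K_1^n)\Bigr) = -\nabla_x\cdot Q^n - E^n\cdot j^n .
\]

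Finally I would handle the field part $\tfrac12 E^2$ purely algebraically: the identity $\tfrac12(E^{n+1})^2 - \tfrac12(E^n)^2 = E^n\cdot(E^{n+1}-E^n) + \tfrac12(E^{n+1}-E^n)^2$ holds pointwise. Dividing by $\tau$ and adding to the kinetic identity gives
\[
\frac{e^{n+1}-e^n}{\tau} + \nabla_x\cdot Q^n - E^n\cdot\Bigl(\frac{E^{n+1}-E^n}{\tau} - j^n\Bigr) = \frac{(E^{n+1}-E^n)^2}{2\tau},
\]
which is \eqref{eq_cont_e}; that the remainder is $\mathcal{O}(\tau)$ follows since $E^{n+1}-E^n = \mathcal{O}(\tau)$ (from Amp\`ere's law / the Poisson update being a consistent one-step map). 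The main obstacle I anticipate is bookkeeping rather than conceptual: one must be careful that the flux actually appearing is $Q^n = \tfrac12\int f_{0v} v v^2 \sum X_j^n S_{jl}^n V_l^n\,dv$ evaluated at time $t^n$ (not some mixed-time object), which is what makes the spatial-divergence term telescope correctly; this hinges on \eqref{eq:exact} and on $U_1, U_3$ being held fixed so that the $K_1, K_3$ components of $S^{n+1}$ inherit the exact-projection structure. A secondary point to check is the precise role of the $\int\nabla_v(f_{0v}V_l)\,dv$ boundary terms (they vanish by decay of $f_{0v}$), exactly as in the proofs of Theorems \ref{thm_1} and \ref{thm_2}.
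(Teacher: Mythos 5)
Your proposal is correct and follows essentially the same route as the paper's proof: it uses the representation \eqref{eq:ennew}, the exact projections \eqref{proj_x}--\eqref{proj_gx} together with the lemma that $E^n\cdot j^n\in\operatorname{span}\{\widetilde X_k^{n+1}\}$, integration by parts in $v$ for the acceleration term, and the pointwise identity $\tfrac12(E^{n+1})^2-\tfrac12(E^n)^2=E^n\cdot(E^{n+1}-E^n)+\tfrac12(E^{n+1}-E^n)^2$. The only cosmetic difference is that you combine the $U_1$ and $U_3$ contributions upfront to form $\tfrac12 v^2$, whereas the paper splits $\tfrac12(v^2-1)=\tfrac12 v^2-\tfrac12$ and cancels the $-\tfrac12\nabla_x\cdot j^n$ piece against the $K_1$ contribution from Theorem \ref{thm_1}; these are algebraically identical.
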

\begin{proof}
\textcolor{black}{According to \eqref{eq:ennew}, it holds that
\begin{equation}
\label{diff_e}
\dfrac{e^{n+1} - e^n}{\tau} = \Vert v^2 -1 \Vert \dfrac{K_3^{n+1} - K_3^n}{2 \tau} + \Vert 1 \Vert \dfrac{K_1^{n+1} - K_1^{n}}{2 \tau} +  \dfrac{(E^{n+1})^2-(E^n)^2}{2 \tau}.
\end{equation}
Since $U_3 = (v^2-1)/\|v^2-1\|$,  a similar argument as in the proofs of Theorems \ref{thm_1} and \ref{thm_2} shows
\begin{equation} \label{diff_K3}
\begin{aligned}
    \Vert v^2 -1 \Vert \dfrac{K_3^{n+1} - K_3^n}{2 \tau} &= \frac{1}{2} \sum_k X_k^{n+1} \Bigl(   X_k^{n+1} (v^2-1),\text{RHS}\Bigl[f=f_{0v}\sum_{ijlm} X_i^{n+1} M_{ij} S_{jl}^n N^T_{lm} V_m^{n+1}\Bigr]\Bigr)_{xv} \\
&=  \sum_k X_k^{n+1} \Bigl(   X_k^{n+1} \dfrac{v^2}{2},\text{RHS}\Bigl[f=f_{0v}\sum_{ijlm} X_i^{n+1} M_{ij} S_{jl}^n N^T_{lm} V_m^{n+1}\Bigr]\Bigr)_{xv} \\
& \qquad -  \frac{1}{2}\sum_k X_k^{n+1} \Bigl(   X_k^{n+1},\text{RHS}\Bigl[f=f_{0v}\sum_{ijlm} X_i^{n+1} M_{ij} S_{jl}^n N^T_{lm} V_m^{n+1}\Bigr]\Bigr)_{xv}. 
\end{aligned}
\end{equation}
Using \eqref{eq:exact} we derive
\begin{equation}
\begin{aligned}
\label{K3}
\Bigl(   X_k^{n+1} & \frac{v^2}{2},\text{RHS}\Bigl[f=f_{0v}\sum_{ijlm} X_i^{n+1} M_{ij} S_{jl}^n N^T_{lm} V_m^{n+1}\Bigr]\Bigr)_{xv} \\ 
&= \Bigl(   X_k^{n+1} \frac{v^2}{2},\text{RHS}\Bigl[f=f_{0v}\sum_{jl} X_j^{n}  S_{jl}^n  V_l^{n}\Bigr]\Bigr)_{xv} \\ 
&= \Bigl(   X_k^{n+1} \frac{v^2}{2},-v \cdot f_{0v} \nabla_x  \sum_{jl}  X_j^{n}  S_{jl}^n  V_l^{n}+ E^n \cdot \nabla_v f_{0v} \sum_{jl}  X_j^{n}  S_{jl}^n  V_l^{n} \Bigr)_{xv}  \\ 
&= -\sum_{jl} S_{jl}^n 
\int X_k^{n+1}  \nabla_x X_j^{n} dx \cdot \int \frac{v^2}{2} v  f_{0v}  V_l^{n}\,dv\\ 
&\quad +   \sum_{jl} S_{jl}^n 
\int X_k^{n+1} E^n   X_j^{n} dx \cdot \int \frac{v^2}{2}\nabla_v(f_{0v}  V_l^{n}) \,dv \\ 
& = -\sum_{jl} S_{jl}^n 
\int X_k^{n+1}  \nabla_x X_j^{n} dx \cdot \int \frac{v^2}{2} v  f_{0v}  V_l^{n}\,dv\\ 
&\quad -   \sum_{jl} S_{jl}^n 
\int X_k^{n+1} E^n   X_j^{n} dx \cdot \int v  f_{0v}  V_l^{n} dv.   
\end{aligned}
\end{equation}
By substitution of \eqref{K3} and according to the definition of $Q$ in section \ref{sec2}, we can write the following quantity as
\begin{align*}
\sum_k X_k^{n+1} &\Bigl(   X_k^{n+1} \frac{v^2}{2},\text{RHS}\Bigl[f=f_{0v}\sum_{ijlm} X_i^{n+1} M_{ij} S_{jl}^n N^T_{lm} V_m^{n+1}\Bigr]\Bigr)_{xv} \\ \nonumber
&=-\sum_{jl} S_{jl}^n
  \nabla_x X_j^{n}  \cdot  \int \frac{v^2}{2} v \cdot f_{0v}  V_l^{n} \,dv
  - \sum_k X_k^{n+1} \int X_k^{n+1}\sum_{jl} S_{jl}^n
  E^n X_j^{n} dx \cdot  \int v \cdot f_{0v}  V_l^{n} \,dv \\ \nonumber
  & = - \nabla_x   \cdot  \int \frac{v^2}{2} v  f_{0v} \sum_{il} X_i^{n} S_{il}^n  V_l^{n} dv -\sum_k X_k^{n+1} \int X_k^{n+1} E^n \cdot  \int  v f_{0v} \sum_{il} X_i^{n} S_{il}^n  V_l^{n} \,dv dx \\ \nonumber
 &= -\nabla_x \cdot Q^n - \sum_k X_k^{n+1} \int X_k^{n+1} E^n j^n dx\\ \nonumber
  &= -  \nabla_x \cdot Q^n -  E^n \cdot j^n.
\end{align*}
From the proof of Theorem \ref{thm_1}, we know that
\begin{align}
\label{K3_2}
\sum_k X_k^{n+1} &\Bigl(   X_k^{n+1},\text{RHS}\Bigl[f=f_{0v}\sum_{ijlm} X_i^{n+1} M_{ij} S_{jl}^n N^T_{lm} V_m^{n+1}\Bigr]\Bigr)_{xv} = -\nabla_x \cdot j^n.
\end{align}
Inserting \eqref{K3} and \eqref{K3_2} into \eqref{diff_K3}, we get
\begin{align}
\label{diff_K3_1}
\Vert v^2 -1 \Vert \dfrac{K_3^{n+1} - K_3^n}{2 \tau}
= -\nabla_x \cdot Q^n -  E^n \cdot j^n + \frac{1}{2}\nabla_x \cdot j^n.
\end{align}
From  Theorem \ref{thm_1} we know that
\begin{equation}
\label{diff_K1}
\|1\| \dfrac{K_1^{n+1}-K_1^n}{2 \tau} = - \frac{1}{2} \nabla_x \cdot j^n.
\end{equation}
Inserting \eqref{diff_K3_1} and \eqref{diff_K1} into \eqref{diff_e}, we obtain
\begin{align*}
\dfrac{e^{n+1}-e^{n}}{\tau} & = -\nabla_x \cdot Q^n -  E^n \cdot j^n + \dfrac{(E^{n+1})^2-(E^n)^2}{2 \tau} \nonumber \\
& =  -\nabla_x \cdot Q^n -  E^n \cdot j^n + \dfrac{(E^{n+1}-E^n)^2 + 2 E^n E^{n+1} - 2 (E^n)^2}{2 \tau} \nonumber \\
& = - \nabla_x \cdot Q^n + E^n \cdot \left(\frac{E^{n+1}-E^n}{\tau}  - j^n \right) + \dfrac{(E^{n+1}-E^n)^2}{2 \tau}.
\end{align*}
Since $E^{n+1}$ is always $\mathcal{O}(\tau)$ close to $E^n$, the proof is complete.
} 
\end{proof}

\subsection{Space discretization \& implementation}

In the previous section we have shown that the proposed integrator conserves mass and momentum exactly and energy up to a time discretization error. We have not dealt with the question of space discretization and the actual implementation in finite precision arithmetics.

It is clear that in order to obtain a conservative scheme the space discretization has to satisfy certain properties of the continuous derivatives that we used in the proof. In particular, we require:
\begin{enumerate}
    \item[(i)] The space discretization has to mimic (replace integration by summation and differentiation by the discrete approximation used) the equation
        \[ \int \nabla_x u  \,dx = 0 \]
        for arbitrary $u$. This is naturally satisfied for centered difference schemes and Fourier spectral methods,  among others.
     \item[(ii)] To prove Theorem 2, i.e. the conservation of momentum, we used integration by parts
    $$ \int v \nabla_v(f_{0v} V^{n+1}_l) \,dv = -\int f_{0v} V^{n+1}_l \,dv.$$
        This property has to be mimicked at the discrete level (i.e.~replacing integrals by sums and derivatives by the discrete approximation used). This has to be taken into account when implementing the scheme.
    In particular, for centered differences we should not rely on the product rule. In fact, for centered differences the discretization of $\nabla_v(f_{0v} V)$ is different from the discretization of $\nabla_v(f_{0v}) V + f_{0v} \nabla_v V$, i.e. the product rule is not exactly satisfied. This is also crucial in the computation of the coefficient $c^2_{kj}$.
\end{enumerate}
Those observations are also important to improve the conservation of energy, since a similar argument of integration by parts is present in the proof of Theorem 3.

In order to get conservation up to machine precision we have to be careful when implementing the following routines in finite precision. In particular, in our implementation we perform the following steps.
\begin{enumerate}
\item[(i)] To avoid incorrect non-zero values in the discrete computation of $c^2_{ak}$, with $a \in \lbrace1, .., m\rbrace$ (i.e. $a$ is an index associated to fixed functions) we directly insert the exact values for the fixed basis functions.
\item[(ii)]Even if the fixed functions $U_1$, $U_2$ and $U_3$ are analytically orthonormal, due to the truncated velocity domain this is not necessarily true up to machine precision in the implementation. Thus, we explicitly orthonormalize the $U_a$ before commencing each time step of the algorithm.
\end{enumerate}

\section{Rank adaptivity\label{rank_adaptivity}}

In this section we will describe an approach to adaptively choose the rank in the time integration. This is important because for a practitioner choosing a fixed rank that satisfies a given accuracy constraint can be difficult. In addition, choosing a fixed rank might be suboptimal in the sense that the rank required at a given time might vary significantly. If this is the case then taking a fixed rank would imply that we always have to choose the largest rank as a function of time.

In \cite{ceruti2022adaptive} and \cite{schrammer_adaptive} some strategies to adaptively choose the rank for dynamical low-rank integrators have been proposed. The main idea is to, if necessary, reduce the rank by removing singular values of $S$ that are below are certain tolerance $\theta$. If the rank needs to be enlarged the dynamical low-rank algorithm is used to propose new basis functions appropriate for the problem at hand. The adaptive low-rank integrator retains the exactness, robustness and symmetry-preserving properties, see \cite{ceruti2022adaptive}.  Similar approaches to rank adaptivity have been used in \cite{kusch2021robust}. A generalization to tensors is also available \cite{ceruti2022badaptive}.

In the proposed algorithm (Algorithm \ref{algorithm}) we have already added basis functions before performing the low-rank algorithm. This is required for the algorithm in order to be conservative (see section \ref{sec:robustcons}).
Thus, we will always be in a situation where we need to truncate the rank after a step of the unconventional integrator has been performed. Following \cite{ceruti2022adaptive} this can be accomplished as follows.
\begin{enumerate}
    \item[1.] \textit{Truncation with respect to the error in the solution}: from algorithm \ref{algorithm} we get $\hat{S}$ (with dimension $\hat{r}$) by performing a singular value decomposition of $\widetilde{S}^{rem}$. If $\hat{\sigma}_k $, for $k = 1,\ldots, \hat{r}$ are the singular values of $\hat{S}$, we choose the new rank $r$ ($r \leq \hat{r}$) such that
$$ \hat{\sigma}^2_{r+1}+\ldots+ \hat{\sigma}^2_{\hat{r}} \leq \theta^2.$$
\end{enumerate}
In this case the tolerance $\theta$ controls the error made by the low-rank approximation in the density function $f(t,x,v)$.

In many applications, however, it might not be the best choice to control the error of the density function itself. Often, practitioners are more interested in averaged quantities such as the electric energy. Thus, we take the viewpoint here that it is advantageous to perform the truncation by taking into account what physical quantities are of ultimate interest. Often, a smaller rank can be sufficient to, e.g.,~approximate the electric energy up to a certain tolerance, than the rank that would be required to obtain the density function up to the same level of tolerance. Such an approach can be easily incorporated within the dynamical low-rank integrator considered here. For this strategy we proceed as follows.
\begin{enumerate}
    \item[2.] \textit{Truncation with respect to error in the electric energy}: if $e$ is the electric energy computed at this stage (i.e.~without performing a truncation), we decrease the rank until the error with respect to the electric energy is bounded by $\theta$. More specifically, we denote by $e_{l}$ the electric energy computed by only using the first $l$ singular values. The new rank $l$ is then chosen as follows
        $$l \text{\; is the minimum such that \;} \vert e-e_{l}\vert \leq \theta. $$
        If the rank $l$ becomes less than $m$, the number of fixed functions, this means that the local error in the electric energy is always below the prescribed threshold $\theta$. In this situation, we set the new rank to $\bar{r}$, where $\bar{r}>m$ is fixed a priori.
\end{enumerate}

Another option would be to prescribe a certain tolerance for the error in energy (which is not exactly conserved by our algorithm). In this case we proceed as follows.
\begin{enumerate}
    \item[3.] \textit{Truncation with respect to the error in energy}: this procedure is similar to strategy $2$, apart from the target quantity. We choose the new rank $l$ as follows
        $$l \text{\; is the minimum, such that \;} \vert \mathcal{E}_{l} - \mathcal{E}^n \vert \leq \theta, $$
        where $\mathcal{E}^n$ is the value for the energy at the previous time, i.e. computed at $t^n$, and $\mathcal{E}_l$ is value of the energy at the current time instant $t^{n+1}$ with rank $l$.
\end{enumerate}

\section{Numerical results}
\label{numerics}

In this section we will provide a number of numerical results to verify both the conservative properties of our scheme as well as the rank adaptive algorithm. We consider the classic test problems of linear and nonlinear Landau damping as well as a two-stream instability. The most pronounced improvement in the qualitative features of the solution are observed for nonlinear Landau damping, where the conservative algorithm can produce a numerical solution that matches well with the exact solution even though at the same rank the classic dynamical low-rank algorithm provides incorrect results. We further consider, for the two-stream instability, the effect of using rank adaptivity with respect to different goal functions (error in the density function $f$, error in the electric energy, and error in total energy) and observe that a significantly smaller rank is often sufficient to approximate certain physically important quantities such as the electric energy (as opposed to the rank that would be required to approximate the density function  up to a given accuracy). In all experiments we use $f_{0v} = \exp(-v^2/2)$ as weight function.

\subsection{Linear Landau damping \label{sec:ll}}
First, we consider the linear Landau damping problem with the rank $1$ initial value
\begin{equation*}
f(0,x,v) = \left( 1 + \alpha \cos(k x)\right)  \dfrac{e^{-v^2/2}}{\sqrt{2 \pi}}.
\end{equation*}
The parameters are set to $\alpha =10^{-2}$ and  $k =0.5$ and the computational domain is $(x, v) \in \left[  0, 4 \pi\right] \times  \left[-6, 6 \right]$.
All simulations are done with a time step size $\Delta t = 10^{-3}$ and $128$ grid points in both the spatial and velocity direction are used. We present results with respect to all the configurations introduced in the previous sections. In particular, we consider $m=0$, $m=1$ with $U_1 \propto 1$, $m=2$ with $U_1 \propto 1$ and $U_2 \propto v$, and, finally, $m=3$ with $U_1 \propto 1$, $U_2 \propto v$ and $U_3 \propto v^2-1$.
We compute the relative error for the mass and the absolute error for the momentum.
In Figure \ref{figure_1} (top-left), we show the decay of the electric energy for $m=0$ (no conservation), $m=1$ (mass conservation),  $m=2$ (momentum conservation), and $m=3$ (energy conservation). We remark that for the linear Landau damping problem all configurations with rank $r=10$ give very good results. In particular, the analytic decay rate is accurately reproduced and the different solutions match each other very well. For $m\geq 1$ we observe a reduction in the error in mass from somewhat above $10^{-9}$ (which is already very accurate) to slightly below $10^{-11}$. Moreover, the error in momentum can be reduced from approximately $10^{-6}$ to $10^{-10}$, i.e. by four orders of magnitude. Thus, for $m\geq 2$ both mass and momentum are conserved up to machine precision.

In Figure \ref{figure_1} we also investigate conservation of energy. Note that in this case not only the error due to the truncation of the low-rank approximation needs to be taken into account, but also the fact that we use an explicit time integrator that does not conserve energy. Nevertheless, the proposed method significantly improves energy conservation (from approximately an error of $10^{-5}$ to $4 \cdot 10^{-7}$).

\begin{figure}[H]
  \includegraphics[width=8cm]{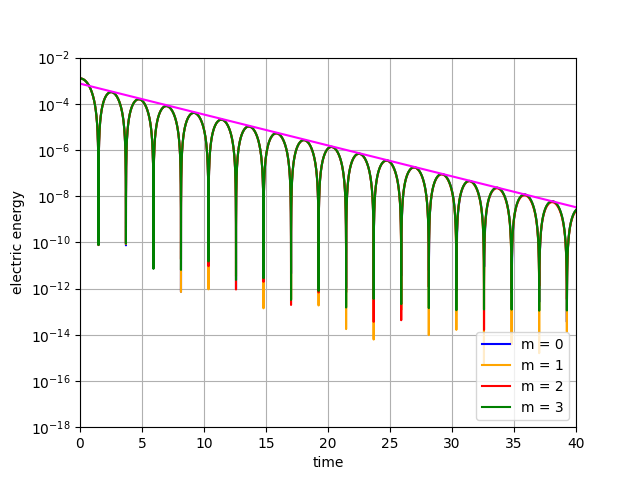}
  \includegraphics[width=8cm]{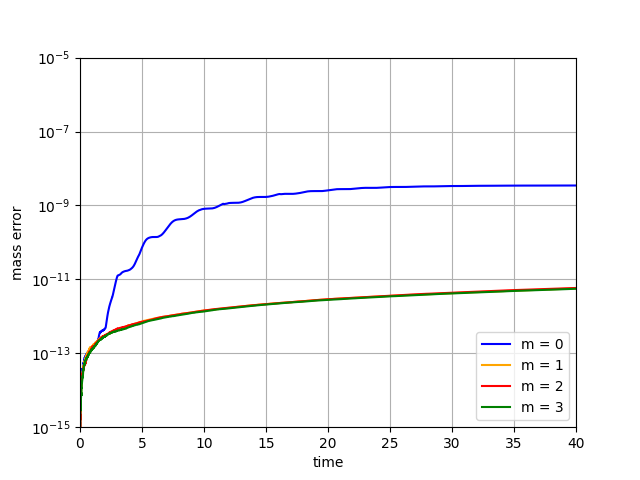}
  \includegraphics[width=8cm]{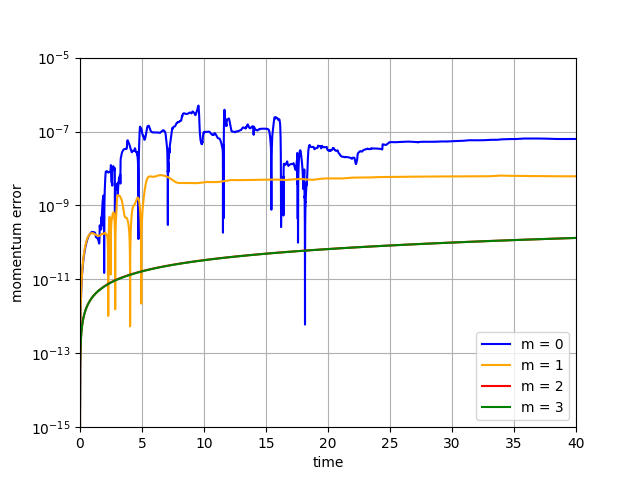}
  \includegraphics[width=8cm]{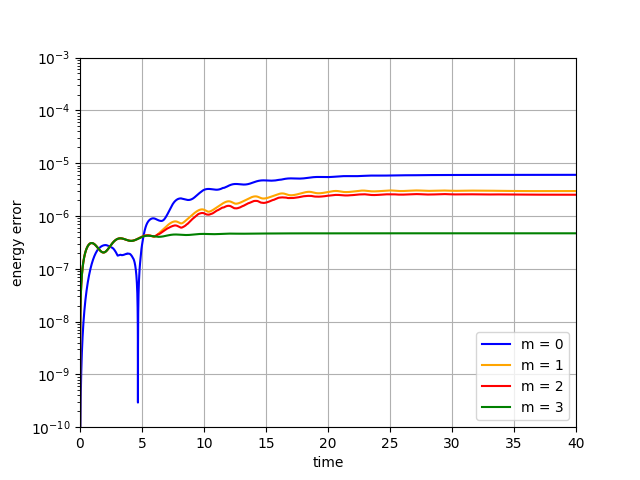}
    \caption{Evolution of the electric energy (top-left), error in mass (top-right), the error in momentum (bottom-left), and error in energy (bottom-right) for linear Landau damping. The analytically derived decay rate is shown in magenta.  Note that, e.g.,~the yellow, orange and red curves overlap in the plot for mass.  The chosen rank is $r=10$  and the configurations considered  are $m=0$ , $m=1$, $m=2$ and $m=3$. }\label{figure_1}
\end{figure}

\subsection{Nonlinear Landau damping}
In this section, we consider some numerical experiments for the nonlinear Landau damping problem. The initial value
\begin{equation*}
f(0,x,v) = \left( 1 + \alpha \cos(k x)\right)  \dfrac{e^{-v^2/2}}{\sqrt{2 \pi}}
\end{equation*}
and the computational domain $(x, v) \in \left[  0, 4 \pi\right] \times  \left[-6, 6 \right]$ are the same as in section \ref{sec:ll}. We now, however, use $\alpha = 0.5$ and  $k =0.5$. This creates strong nonlinear effects that, after an initial decay in the electric energy, result in an increase in electric energy.
As in the previous example, we discretize space and velocity with $128$ grid points and use a time step size of $\Delta t = 10^{-3}$. In these experiments, the rank used is $r=25$.

The numerical results in Figure \ref{figure_2} show that for the classic dynamical low-rank algorithm (i.e. $m=0$) the computed solution does not match the expected behavior. In particular, the expected increase in the electric energy is not observed. Using the conservative numerical scheme, on the other hand, the solution matches the expected behavior very well. Thus, in this example, using the conservative scheme markedly improves the qualitative behavior of the solution.

In addition, Figure \ref{figure_2} shows that we obtain mass and momentum conservation up to machine precision for $m \geq 1$ and $m \geq 2$, respectively, improving the error made in these quantities compared to the classic dynamical low-rank algorithms by approximately $10$ orders of magnitude. Moreover, energy conservation is significantly improved for $m=3$, by approximately 2 orders of magnitude.

\begin{figure}[H]
     \includegraphics[width=8cm]{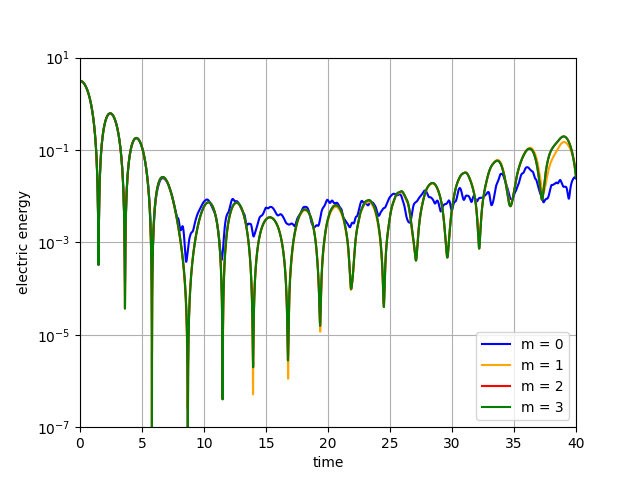}
     \includegraphics[width=8cm]{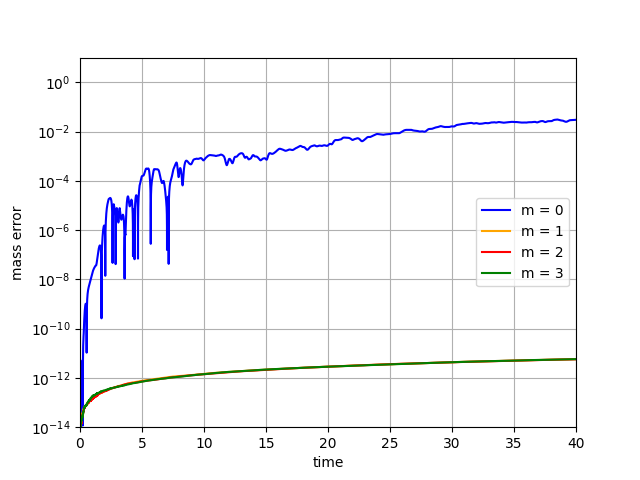}
     \includegraphics[width=8cm]{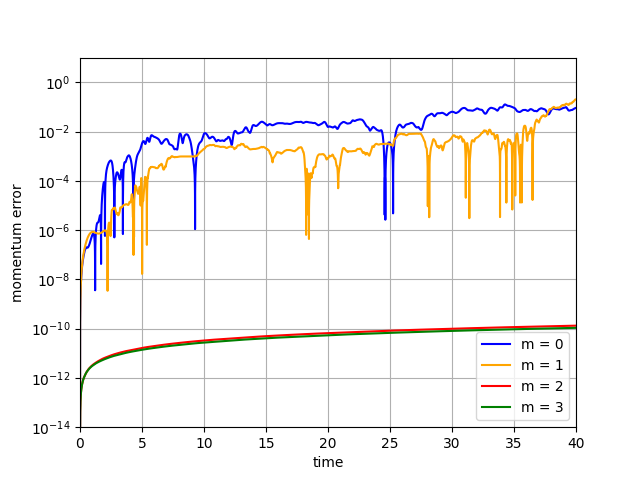}
     \includegraphics[width=8cm]{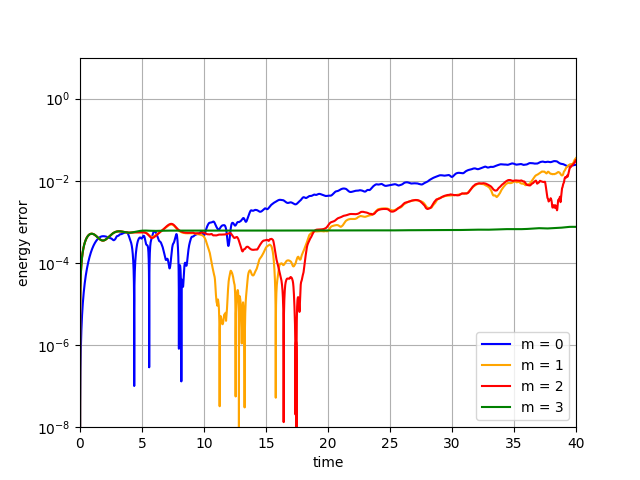}
    \caption{Evolution of the electric energy (top-left), error in mass (top-right), the error in momentum  (bottom-left), and error in energy (bottom-right) for nonlinear Landau damping.
Note that, e.g.,~in the plot for mass conservation the yellow, orange and red curves overlap.  The chosen rank is $r=25$  and the configurations considered  are $m=0$ , $m=1$, $m=2$ and $m=3$. }\label{figure_2}
\end{figure}

\subsection{Two stream instability}
We consider the  two stream instability
\begin{equation*}
f(0,x,v) = \dfrac{1}{2 \sqrt{2 \pi}}\left( e^{-(v-\bar{v})^2/2}+e^{-(v+\bar{v})^2/2} \right) \left( 1 + \alpha\cos(k x )\right)
\end{equation*}
with $\alpha =10^{-3}$, $k =0.2$, and $
\bar{v}=2.4$ on the domain $(x, v) \in \left[  0, 10 \pi\right] \times  \left[-7, 7 \right]$. We choose again $128$ grid points in both the spatial and the velocity direction and $\Delta t = 10^{-3}$ in all the experiments. We report the relative mass error and the absolute momentum error (since the total momentum in the system is zero).

In Figure \ref{figure_3}, we compare the error in mass and momentum for $m=0$, $m=1$, $m=2$ and $m=3$. The considered rank is  $r=10$. It is well known that the two-stream instability is a difficult problem for low-rank approximations with respect to conservation. As the instability progresses, the error in mass, momentum, and energy for the classic scheme (i.e.~$m=0$) increases until it is close to $\mathcal{O}(1)$. At that point the physical interpretation of the solutions seems problematic. As can be observed in Figure \ref{figure_3}, the error in mass for $m\geq 1$ and the error in the momentum for $m \geq 2$ are reduced to machine precision, as expected. Moreover, the conservative scheme is able to reduce the error in energy from $10^{-2}$ ($m=0$) to $10^{-5}$ ($m=3)$, i.e. by three orders of magnitude.

\begin{figure}[H]
     \includegraphics[width=8cm]{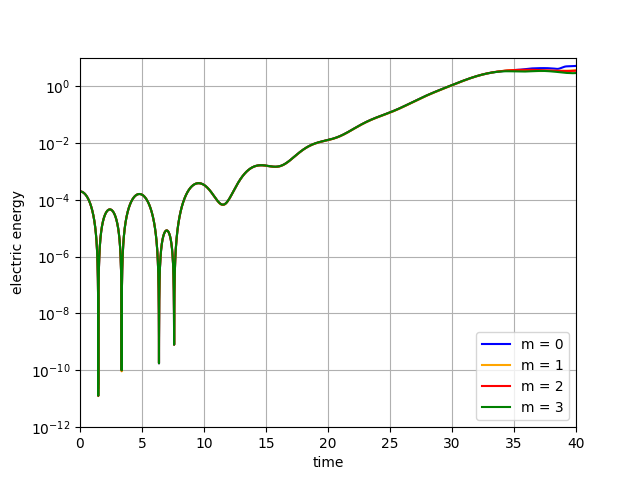}
     \includegraphics[width=8cm]{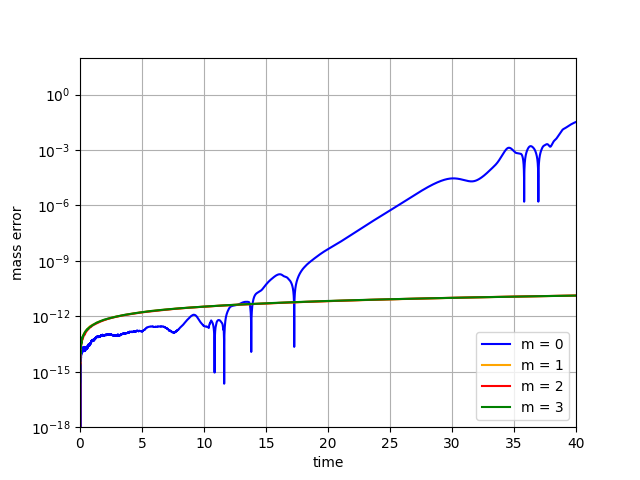}
     \includegraphics[width=8cm]{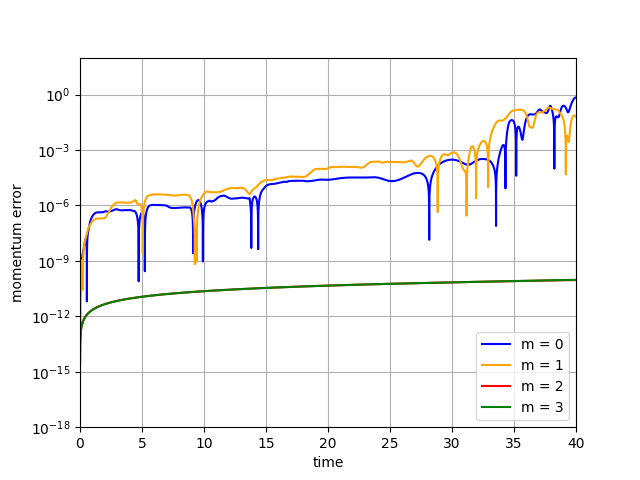}
     \includegraphics[width=8cm]{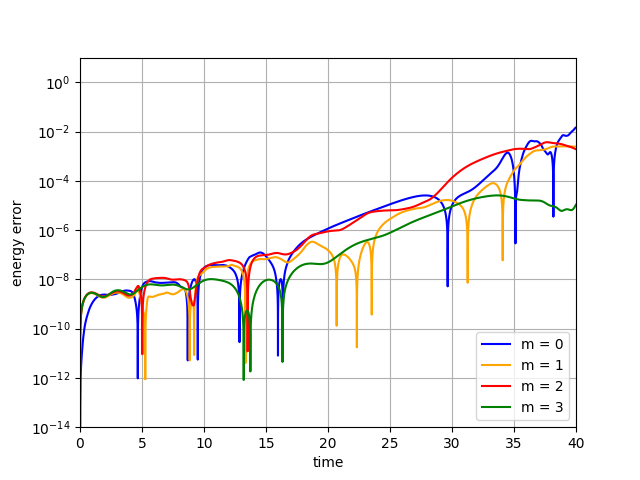}
    \caption{Evolution of the electric energy (top-left), error in mass (top-right), error in momentum (bottom-left) and error in energy (bottom-right) for the two stream instability.
Note that, e.g.,~in the mass conservation plot the yellow, orange and red curves overlap. The rank used is $r=10$.  }\label{figure_3}
\end{figure}

Finally, let us consider rank adaptivity. Most commonly such an approach uses the singular values of $S$ in order to control the error in the density function $f$. This is illustrated in Figure \ref{figure_6}, where we see that the rank of the simulation increases significantly as we enter into the nonlinear part of the dynamics.

\begin{figure}[H]
    \includegraphics[width=8cm]{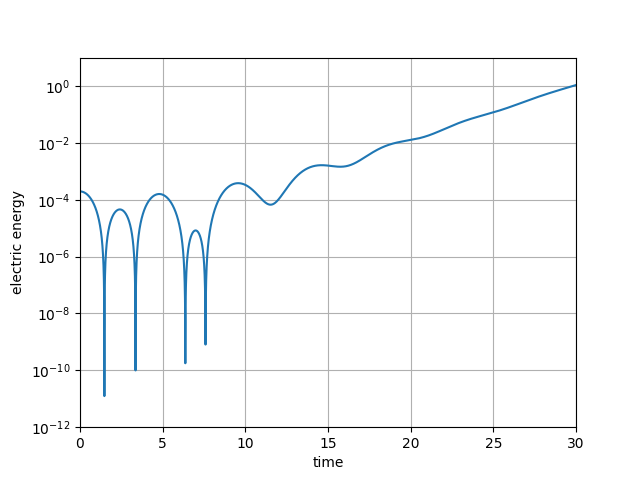}
    \includegraphics[width=8cm]{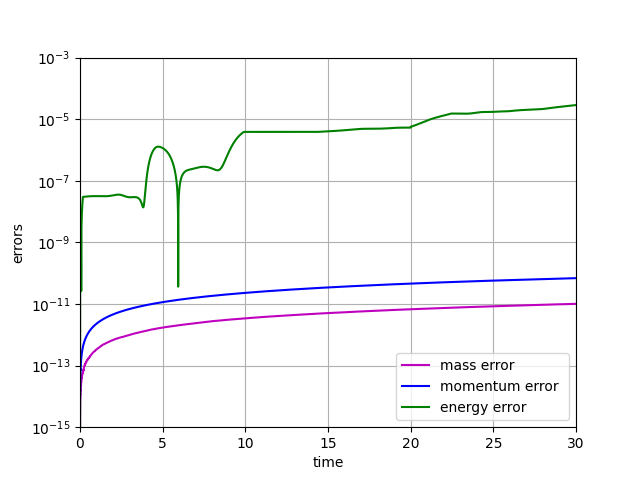}
    \includegraphics[width=8cm]{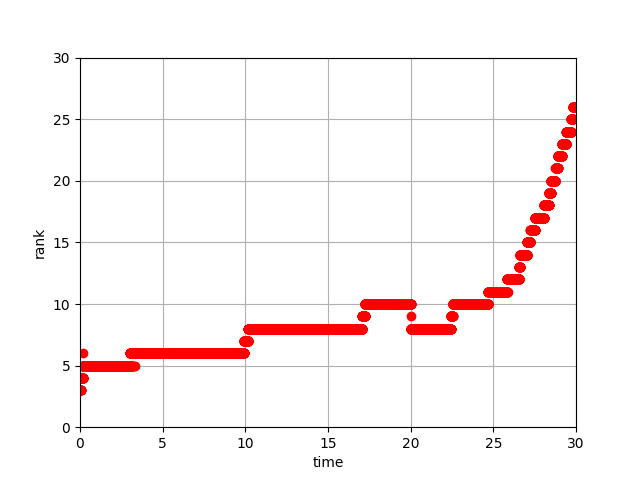}
    \caption{Rank adaptive scheme for the two stream instability with the error in the solution as the metric of interest. On the top-left, the corresponding time behavior of the electric energy. On the top-right, plot of mass, momentum and energy errors for $m=2$. On the bottom-left, the behavior of the rank as a function of time, with tolerance $\theta =10^{-9}$ until $t=20$, then $\theta = 10^{-7}$.
    }\label{figure_6}
    \end{figure}

An interesting aspect in that regard is that we do not necessarily have to adapt the rank according to the error in the density function. In many cases the quantity of interest is the electric energy. In such a situation we can use the tolerance to control the error in the electric energy. The corresponding numerical results are shown in Figure \ref{figure_9}. In fact, we can see here that rank $r=10$ is sufficient to resolve the electric energy up to an error per unit time step of size $\theta = 10^{-10}$. Thus, a significantly lower rank is sufficient in order to obtain accurate results in terms of the electric energy as opposed to the density function.

\begin{figure}[H]
    \includegraphics[width=8cm]{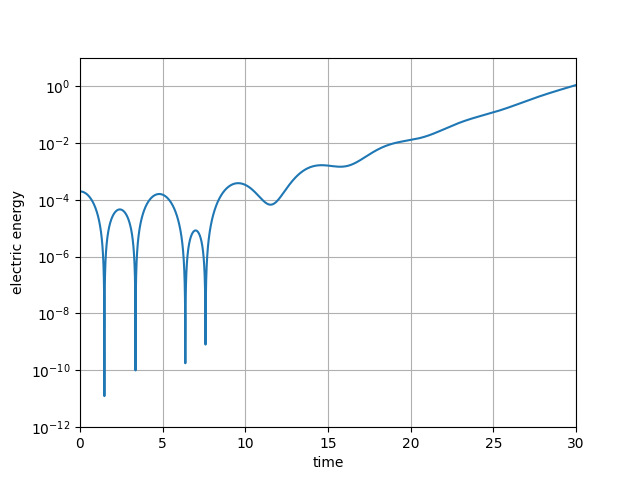}
    \includegraphics[width=8cm]{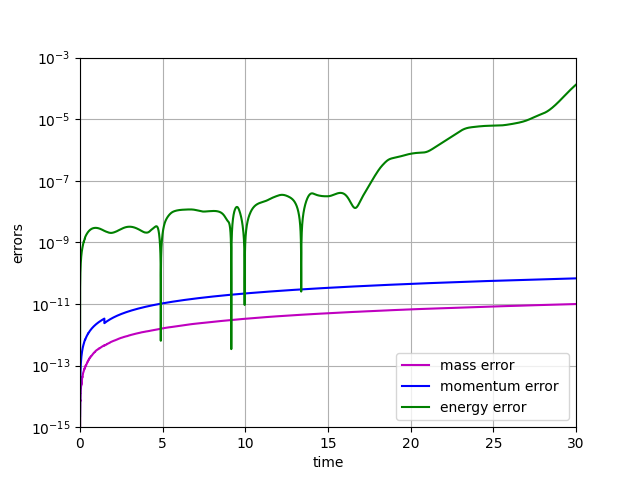}
  \includegraphics[width=8cm]{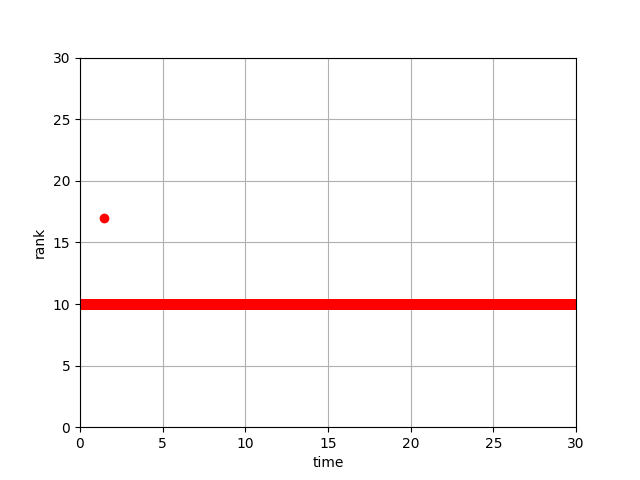}
    \caption{Rank adaptive scheme for the two stream instability with the error in the electric energy as the metric of interest. On the top-left, the corresponding time behavior of the electric energy. On the top-right, plot of mass, momentum and energy errors for $m=2$. On the bottom-left, the behavior of the rank as a function of time, with tolerance $\theta = 10^{-10}$. The minimum rank is $r = 10$.}\label{figure_9}
\end{figure}

Another possible goal function for the adaptive algorithm is the error in energy. Such an approach enforces that the error in energy per unit time step is below the tolerance. The corresponding numerical results are shown in Figure \ref{figure_7}. The tolerance chosen here is very small ($\theta=10^{-12}$) in order to illustrate the adaptive algorithm.

\textcolor{black}{In our view adapting the rank based on macroscopic quantities (such as the error in electric energy or the error in energy) can be very useful for practical simulations. As the results show a significantly smaller rank can often be used compared to using the error in the distribution function as the figure of merit. From a physical point of view we are often primarily interested in macroscopic quantities (such as the electric energy) in any case and thus accurate results can be obtained at significantly reduced computational cost.}

\begin{figure}[H]
{\includegraphics[width=8cm]{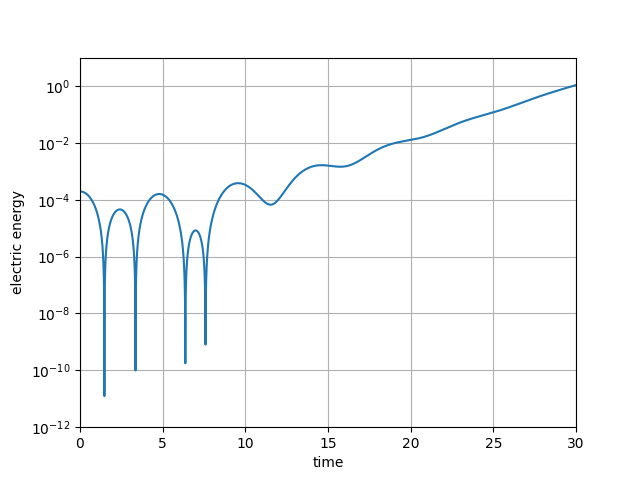}}
  {\includegraphics[width=8cm]{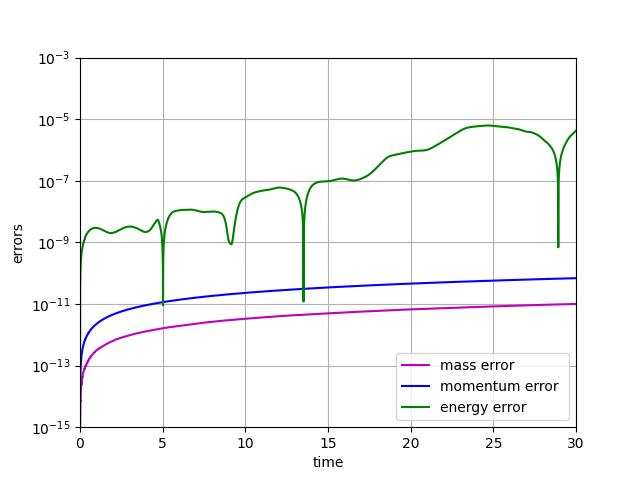}}
{\includegraphics[width=8cm]{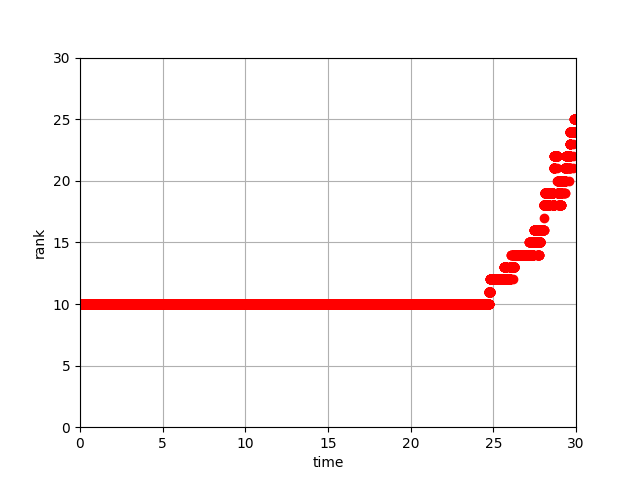}}
    \caption{Rank adaptive scheme for the two stream instability with the error in the energy as the metric of interest. On the top-left, the corresponding time behavior of the electric energy.  On the top-right, the  errors in mass, momentum and energy evolving in time for $m=2$. On the bottom-left, the behavior of the rank as a function of time, with $\theta = 10^{-12}$.}\label{figure_7}
\end{figure}

\section*{Acknowledgements}
C. Scalone is supported by GNCS-INDAM project and  PRIN2017-MIUR project 2017JYCLSF ``Structure preserving approximation of evolutionary problems``.

\bibliographystyle{plain}
\bibliography{references}

\end{document}